  \let\varepsilon\epsilon 
  \newcommand{\mbf}{\mathbf} 
\newtheoremstyle{mytheoremstyle}
{\topsep}                    
{\topsep}                    
{\slshape}                   
{0.32in}                           
{\bfseries}                  
{.}                          
{0.08in}                       
{}  
\theoremstyle{mytheoremstyle}
   \newtheorem{theorem}{Theorem} }
\newtheorem{theorem}{Theorem}[section] }
\newtheorem{proposition}[theorem]{Proposition}
\newtheorem{lemma}[theorem]{Lemma}
\newtheorem{remark}[theorem]{Remark}
\newtheorem{assumption}[theorem]{Assumption}
\newtheorem*{theo}{Theorem}
\renewenvironment{proof}[1][\proofname]{{\scshape #1. }}
{\qed\vspace{0.16in}}
\titleformat{\section}[runin]
{\bfseries}
{\thesection.}{0.04in}{}[.]
\titlespacing{\section}{0.32in}{0.32in}{0.08in}
\titleformat{\subsection}[runin]
{\slshape} 
{\thesubsection.}{0.04in}{}[.]
\titlespacing{\subsection}{0.32in}{0.08in}{0.08in}
\renewcommand{\abstractname}{\bf Abstract}
\renewenvironment{abstract}
{\small
\begin{center}
  \bfseries \abstractname\vspace{0in}\vspace{0pt}
\end{center}
\list{}{
	\setlength{\leftmargin}{0.5in}
    \setlength{\rightmargin}{\leftmargin}
	\itemindent \parindent} 
  \item\relax}
{\endlist}
\newcommand{\citeasnoun}{\cite}}
\newcommand{\citen}{\citeasnoun}
\newcommand{\citep}{\cite}
\newcommand{\req}[1]{(\ref{#1})}
\global\mdfdefinestyle{clean}{%
linecolor=black,linewidth=0.5pt,%
leftmargin=0.32in,rightmargin=0.32in
}
\newcommand{\s}[1][1]{\hspace{#1pt}}
\newcommand{\tq}[1]{{\textquotedblleft #1\textquotedblright}}
\newcommand{\oprst}{\textup} 
\newcommand{\indf}[1]{ 1_{ \s[-0.5] \{#1\} }}
\newcommand{\Var}{\oprst{Var}}
\newcommand{\Exp}{\oprst{E}\s[.5]}
\newcommand{\tsum}{{\textstyle\sum\nolimits}}
\newcommand{\bst}{ \s[1.5] | \s[1.5] }
\newcommand{\cst}{ \s[0.5] : \s[0.5] }
\newcommand{\tr}{\textup{tr}}
\xdef\csname cal\x\endcsname{\noexpand 
	\ensuremath{\noexpand\mathcal{\x}}}
\xdef\csname scr\x\endcsname{\noexpand 
	\ensuremath{\noexpand\mathscr{\x}}}
\xdef\csname bb\x\endcsname{\noexpand 
	\ensuremath{\noexpand\mathbb{\x}}}
\xdef\csname rm\x\endcsname{\noexpand 
	\ensuremath{\noexpand\mathrm{\x}}}
\xdef\csname bf\x\endcsname{\noexpand 
	\ensuremath{\noexpand\mbf{\x}}}
\newcommand{\upalpha}{\alpha}
\newcommand{\upbeta}{\beta}
\newcommand{\upgamma}{\gamma}
\newcommand{\updelta}{\delta}
\newcommand{\upepsilon}{\epsilon}
\newcommand{\upmu}{\mu}
\newcommand{\upxi}{\xi}
\newcommand{\upphi}{\phi}
\newcommand{\uppsi}{\psi}
\newcommand{\upep}{\upepsilon}
\newcommand{\upto}{ \uparrow }
\newcommand{\limp}{\lim_{p \upto \infty}}
\newcommand{\mselet}{\textup{MSE}}
\newcommand{\mse}[2]{\mselet\s(#1\bst #2)}
\newcommand{\msp}[2]{\mselet_p (#1\bst #2)}
\newcommand{\msf}[2]{\mselet_\infty (#1\bst #2)}
\newcommand{\sphlet}{\textup{SPH}}
\newcommand{\sph}[2]{\sphlet\s(#1\bst #2)}
\newcommand{\spp}[2]{\sphlet_p (#1\bst #2)}
\newcommand{\spf}[2]{\sphlet_\infty (#1\bst #2)}
\newcommand{\ip}[2]{\langle #1 , #2 \rangle}
\newcommand{\avelet}{m}
\newcommand{\ave}[1]{\avelet(#1)}
\newcommand{\avp}[1]{\avelet_p (#1)}
\newcommand{\avf}[1]{\avelet_\infty (#1)}
\newcommand{\avsq}[1]{\avelet^2(#1)}
\newcommand{\varlet}{\rms}
\newcommand{\var}[1]{\varlet^2 (#1)}
\newcommand{\vap}[1]{\varlet^2_p (#1)}
\newcommand{\vaf}[1]{\varlet^2_\infty (#1)}
\newcommand{\sap}[1]{\varlet_p (#1)}
\newcommand{\cov}[2]{\textup{cov}( #1,#2 )}
\newcommand{\cof}[2]{\textup{cov}_\infty( #1,#2 )}
\newcommand{\dsp}{\rmr}
\newcommand{\unk}{\theta}
\newcommand{\mun}{\upmu}
\newcommand{\sig}{\sigma}
\newcommand{\xig}{\upxi}
\newcommand{\dev}{w}
\newcommand{\dvl}{\nu}
\newcommand{\del}{\updelta}
\newcommand{\dat}{\rmY}
\newcommand{\red}{\calE}
\newcommand{\res}{\upep}
\newcommand{\seig}{\scrs}
\newcommand{\csph}{\rmd}
\newcommand{\reig}{\upalpha}
\newcommand{\nvec}{\upgamma}
\newcommand{\nois}{\upphi}
\newcommand{\nprj}{\varphi}
\newcommand{\NM}{\Gamma}
\newcommand{\NS}{\rmG}
\newcommand{\xvp}{x_{p}}
\newcommand{\xvx}{x}
\newcommand{\xvf}{x_{\infty}}
\newcommand{\XX}{\rmX}
\newcommand{\Xf}{\calX}
\newcommand{\xp}{\uppsi_p}
\newcommand{\xx}{\uppsi}
\newcommand{\xf}{\uppsi_\infty}
\newcommand{\snr}{\textup{SNR}}
\newcommand{\dst}{\chi}
  \newcommand{\hJS}{h^{\s\textup{\bf JS}}}
  \newcommand{\etaJS}{\eta^{\s\textup{\bf JS}}}
  \newcommand{\hJS}{h^{\s\textup{JS}}}
  \newcommand{\etaJS}{\eta^{\s\textup{JS}}}
\definecolor{red}{rgb}{1,0,0}
\definecolor{gray}{rgb}{0.5,0.5,0.5}
\definecolor{darkgray}{rgb}{0.4,0.4,0.4}
\definecolor{blue}{rgb}{0,0,1}
\definecolor{green}{rgb}{0,1,0}
\definecolor{deluge}{RGB}{124, 113, 173}
\definecolor{bamboo}{RGB}{220, 92, 5}
\definecolor{yellow}{RGB}{255, 172, 0}
\definecolor{orange}{RGB}{255, 144, 0}
\definecolor{oyster}{RGB}{151, 139, 125}
\definecolor{coral}{RGB}{199, 186, 167}
\definecolor{downy}{RGB}{110, 197, 184}
\title{{\Large \bf James-Stein estimation
of the first principal component}}
\author{
Alex Shkolnik\footnote{Department of Statistics 
and Applied Probability, University of California, 
Santa Barbara, CA and Consortium for Data Analytics in Risk, 
University of California, Berkeley, CA.
Email:~{\tt shkolnik@ucsb.edu}.}
}
\date{August 31, 2021 \\ \vspace{0.08in}
This Version: 
\today.\footnote{I am indebted to Lisa Goldberg for 
conjecturing the result derived in this work.}}
\begin{document}

\ifxetex
  \let\lsum\sum
  \renewcommand{\sum}{\bm{\lsum}}

  \let\lprod\prod
  \renewcommand{\prod}{\bm{\lprod}}
\else
\fi


\maketitle

\begin{abstract} 
The Stein paradox has played an influential role in the field of
high dimensional statistics.  This result warns that the sample
mean, classically regarded as the \tq{usual estimator}, may be
suboptimal in high dimensions.  The development of the
James-Stein estimator, that addresses this paradox, has by now
inspired a large literature on the theme of \tq{shrinkage} in
statistics. In this direction, we develop a James-Stein type
estimator for the first principal component of a high dimension
and low sample size data set. This estimator shrinks the
usual estimator, an eigenvector of a sample covariance matrix 
under a spiked covariance model, and yields superior asymptotic
guarantees. Our derivation draws a close
connection to the original James-Stein formula
so that the motivation and recipe for
 shrinkage is intuited in a natural way.
\end{abstract}

\vspace{0.16in}
{\bf Keywords:} Stein’s paradox, James-Stein estimator, 
sample eigenvectors, PCA.
\newpage

\section{Introduction} \label{sec:intro}
The Stein paradox has played an influential role in the field of
high dimensional statistics. This result warns that the sample
mean, classically regarded as the \tq{usual estimator}, may be
suboptimal in high dimensions. In particular, \citen{stein1956}
showed that the usual estimator of a location parameter $\unk
\in \bbR^p$ from uncorrelated Gaussian observations becomes
inadmissible when $p > 2$ under a mean-squared error criterion.
That is, an estimator with a uniformly lower risk must exist.
That estimator was established by \citen{james1961} and
eponymously named.

Among the numerous perspectives that motivate the James-Stein
estimator,\footnote{A few examples include the Galtonian
regression perspective promoted by \citen{stigler1990}, the
purely frequentist development of the estimator in
\citen{gupta1991} and the geometrical explanation in
\citen{brown2012} that builds on Stein's original heuristic
argument \citep[Section 1]{stein1956}.} the empirical Bayes
perspective (see \citen{efron1975}) is particularly elegant.
Letting $\eta$ denote the sample mean computed with $n$ measurements
of an unknown $\theta \in \bbR$ and assuming an additive,
normally distributed error $\dev$ that has a zero mean and
a variance $\dvl^2$ (e.g., $\dvl = \del /\sqrt{n}$ where each
measurement has standard error $\del$), we write
\begin{align} \label{jsprob}
  \eta = \unk + \dev \s .
\end{align}
Taking a Gaussian prior on the unknown 
$\unk$, that is independent of $\dev$, implies that
\begin{align} \label{ce}
 \Exp (\unk \bst \eta) = \Exp (\eta)
+ \bigg( 1 - \frac{\dvl^2}{\Var(\eta)} \bigg)
 (\eta - \Exp (\eta)) \s ,
\end{align}
the bivariate-normal conditional expectation formula.  While, by
definition of conditional expectation, $\Exp(\unk \bst \eta)$ is
the best estimator of $\unk$ in the sense of mean-squared error,
it cannot be implemented directly as the first two moments of
$\eta$ are unknown.\footnote{Often, $\dvl$ is assumed to be
known but estimates $\hat{\dvl}$ can also be used as done in
\citen{james1961}.} Stein's paradox now amounts to the fact that
\tq{good} substitutes for $\Exp(\eta)$ and $\Var(\eta)$ are
available only in higher dimensions; precisely, when
$\unk \in \bbR^p$ with $p > 2$.

Formula $\req{ce}$ extends easily to the
multivariate\footnote{See formulas in 
\citen[Section 2.5]{anderson2003} which 
can be used to design James-Stein estimators of an unkown
$\unk \in \bbR^p$ when $\dev$ has a general  covariance 
$\Sigma$ as in \citen{bock1975}.} case and 
motivates the estimator
\begin{align} \label{jshrink}
  \eta(\rmc) = \avelet
 + \rmc 
  \s (\eta - \avelet)
\end{align}
where $\avelet$ is an estimate (or guess) of the expected
value
of $\eta \in \bbR^p$ and $\rmc \in (0,1)$ is a shrinkage 
parameter. In words, $\req{jshrink}$ attempts to 
center the entries of $\eta$, 
shrinks the resulting entries and recenters at $\avelet$. 
Assuming $\dvl$ is known and $p > 2$, setting
\begin{align} \label{jsc}
 \rmc = 1 - 
 \s \frac{\dvl^2}{\var{\eta}}
 \Big( \frac{p-2}{p} \Big)
\end{align}
where $\var{\eta} = \sum_{i=1}^p (\eta_i - \avelet_i)^2 / p$
yields the James-Stein estimator.  Remarkably, any fixed
$\avelet \in \bbR^p$ (e.g., \citen{stein1956} considers the 
origin)\footnote{
A natural choice for $\avelet$ takes the sample mean 
of $\eta$ in each entry, referred to as shrinkage toward the 
\tq{grand mean}. However, this would 
require $p > 3$ \citep{efron1975}).
}
results in an estimator $\req{jshrink}$ with a strictly smaller
mean-squared error than $\eta$  \citep[Section 1]{efron1975}.
While three is provably the critical dimension\footnote{
The theme of a critical dimension is 
encountered frequently in statistics and probability.
\citen{brown1971}, for example, derives a close mathematical
relationship between the admissibility of the James-Stein
estimator and the transience of the Brownian 
motion in $\bbR^p$, which also requires $p > 2$.}, 
\citen[Section 1]{stein1956} heuristically argued 
for higher performance in higher dimensions and
for relaxing the normality.

 
The James-Stein estimator has inspired a
rich literature on the theme of \tq{shrinkage} in statistics.
Just a small sampling of examples includes ridge regression
\citep{hoerl1970}, the LASSO \citep{tibshirani1996}, the
Ledoit-Wolf covariance estimator \citep{ledoit2004} and the
Elastic Net \citep{zou2005}. Excellent textbook treatments of
the ideas behind the Stein paradox and James-Stein shrinkage
include \citen{gruber2017} and \citen{fourdrinier2018}. In this
paper, we leverage these ideas to develop and analyze a
James-Stein estimator for the first principal component of a
sample covariance matrix. The results again prove the
efficacy of James-Stein estimation, and do so for one the
cornerstone methods in high-dimensional statistics,
principal component analysis \cite{jolliffe2016}.

Consider a $p \times p$ sample covariance matrix $\rmS$
that is based on $n$ observations of some random vector
$y \in \bbR^p$. Without loss of generality, we write
\begin{align}
  \rmS 
 = \seig^2_p h h^\top  + \NS
\end{align}
for $\NS = \rmS - \seig^2_p\s h h^\top$ and $h$, 
the sample eigenvector with the largest eigenvalue, i.e., 
\begin{align} \label{hands}
   \rmS\s  h = \seig^2_p \s h \s[16] \text{and} \s[16]
  \seig^2_p = \max_{|z|=1} \ip{z}{\rmS z}   \s .
\end{align}
By convention $h$ has unit length and corresponds to a direction
along which the variance of $\rmS$ (i.e., $\seig^2_p$) is
maximum (i.e., the first principal component). A substantial and
rapidly growing literature exists to study the ($p$ and/or $n$)
asymptotic behavior of the eigenpair $(\seig^2_p, h)$ and the
remaining eigenstructure in order to quantify either the
estimation or the empirical error.  See \citen{wang2017} for
recent results and a systematic discussion of this literature.
The topic of shrinkage estimators arises naturally in this
context and was raised by \citen{stein1986}, who suggested
improving the usual estimate $\rmS$ via eigenvalue shrinkage.
Indeed, there is by now a large literature on estimators that
adjust the eigenvalues of sample covariance matrices to improve
their performance with respect to some loss function
\citep{donoho2018}.

In this paper, we develop and analyze a James-Stein estimator
for the first principal component of a high-dimension and
low-sample (HDLS) data set.\footnote{The HDLS framework,
as introduced in \citen{hall2005} and \citen{ahn2007},
is increasingly relevant for data
science  \cite{aoshima2018}.} 
The recipe for the estimator
begins with $h$ and $\seig^2_p$ in $\req{hands}$ and the next 
$(q-1)$ largest sample eigenvalues $\seig^2_{p-1},
\dots, \seig^2_{p-q+1}$ ($\min(n,p)>q$) corresponding
to a model with $q$ spikes.\footnote{Roughly speaking
$q$ is the number of factors (or spikes) in the data, after
which a sufficiently large eigengap (between the
$q$th and the next eigenvalue) is observed
(see \citen{fan2020}).}

\begin{enumerate}[leftmargin=0.75in, rightmargin=0.75in]
\item[\bf Step 1.] Set $\eta = \seig_p h$,
compute the sample statistics
$\ave{\eta} = \sum_{i=1}^p \eta_i/p$ and
$\var{\eta} = \sum_{i=1}^p (\eta_i-\ave{\eta})^2/p$,
and define
\begin{align*} 
 \rmc = 1 - \frac{\hat{\dvl}^2}{\var{\eta}} 
\hspace{0.16in} \text{where} \hspace{0.16in}
\hat{\dvl}^2   =
\bigg( \frac{\tr(\rmS)-
(\seig^2_p+\cdots +\seig^2_{p-q+1})}{\min(n,p)-q} \bigg) 
 \s[2]/ p \s .
\end{align*}
\item[\bf Step 2.] Return the estimator
(corrected principal component)\footnote{With a slight
abuse of the notation, $u - x = (u_1-x,\dots, u_p-x)$
for $u \in \bbR^p$ and $x \in \bbR$.\label{ft:abuse}}
\begin{align*} 
 \hJS = \frac{1}{\sqrt{p}} \s \bigg(
 \frac{\ave{\eta} + \rmc\s[1.5] (\eta - \ave{\eta})}
 {\sqrt{\avsq{\eta} + \rmc^2\s \var{\eta}}} 
 \bigg) \s .
\end{align*}
\end{enumerate}

The vector $\hJS$ is the James-Stein estimator of the 
first principal component of the data. The numerator contains the shrinkage
formula $\req{jshrink}$ while the divisor normalizes the
shrunk vector to a unit length (by convention). The
relationship to the shrinkage parameter $\rmc$ in $\req{jsc}$
is evident by treating $p$ as large. The estimate $\hat{\dvl}^2$
corresponds to the bulk of the eigenvalue spectrum, and may
be viewed the \tq{noise} in the context of a signal-to-noise
ratio that plays a prominent role of the results
in Sections \ref{sec:model} \& \ref{sec:js}.

It is reasonable to suspect that a James-Stein type shrinkage of
the principal component $h$, a high dimensional vector, could
improve either the convergence rate or accuracy of the limit in
some appropriate asymptotic regime. However, the standard
orthonormal transformation and the eigengap partition of the
sample eigenvectors, that is typically leveraged by their
asymptotic analyses (e.g., \citen{paul2007}, \citen{shen2016}
and \citen{wang2017}), can obscure the systematic nature of the
sample bias. As sensibly pointed out by \citen{wang2017} in
reference to the partition of the sample eigenvector, the
\tq{\it two parts intertwine in such a way that correction for
the biases of estimating eigenvectors is almost impossible.}
However, in the original (untransformed) coordinate system and
the HDLS asymptotic regime, the bias can in fact be identified,
characterized and (partially) corrected. This program was
carried out in \citen{goldberg2021}, who adopt a factor model in
an HDLS regime and utilize a portfolio theory application to
motivate their analysis. 

The main results of this paper rederive the adjustment of
\citen{goldberg2021} but within a James-Stein type framework.
In particular, we establish identity $\req{jsprob}$ in which
$\eta = \seig_p h$ and $\unk$ is related to the associated
population eigenvector. From here, the James-Stein shrinkage
acts on the perturbation $\dev$ so that the estimator $\hJS$
outperforms $h$ on the mean-squared error and angle metrics as
$p \upto \infty$.  The theoretical guarantees provided here are
new and their proofs rely on a different set of mathematical
tools than \citen{goldberg2021}. In particular, the new approach
leverages Weyl's inequality and the Davis-Kahan theorem from
matrix perturbation theory to give simpler proofs and
potentially expand the scope of applicability of the resulting
estimator.  The HDLS regime, in which the number of
variables~$p$ grows to infinity, and the number of
observations~$n$ to be fixed, plays a crucial role in the
analysis. 


The paper is organized as follows. Section~2 defines the spiked
covariance model underlying our results.  Section~3 develops the
James-Stein estimator $\hJS$ and Section~4 proves the
theoretical guarantees for this estimator.  Appendix A contains
proofs of the auxiliary results.  The following notation is used
throughout. Let $\ip{u}{v}$ denote the standard inner product of
$u,v \in \bbR^d$ so that $|u| = \sqrt{\ip{u}{u}}$ and $\ave{u} =
\ip{u}{\rme}/d$ where $\rme = (1, \dots, 1)^\top$ are the length
and mean.  Set $\var{u} = |u-\ave{u}|^2/d$ and $\cov{u}{v} =
\ip{u-\ave{u}}{v-\ave{v}}/d$ (see the notation of footnote
\ref{ft:abuse}). We use a subscript $1 \le p \le \infty$ to
highlight the dependence on $p$ of various quantities, e.g.,
$\ave{\eta} = \avp{\eta}$ for $\eta \in \bbR^p$ and $\avf{\eta}$
is the limit $\limp \avp{\eta}$ when it exists.

\section{A scarcely sampled spiked model} \label{sec:model}
We use a spiked covariance model, borrowed from the
HDLS literature. 
We also restrict ourselves to a single unbounded
spike in the \tq{boundary case} (see \citen{jung2012})
wherein the largest eigenvalue of the covariance matrix
grows linearly in $p$.
In particular, consider a mean-zero (w.l.o.g.) random vector
$y \in \bbR^p$ with a $p \times p$ 
covariance matrix $\Sigma = \Var(y)$ and let, 
\begin{align} \label{popcov}
  \Sigma = \NM +  \beta \beta^\top
\end{align}
for a symmetric, positive-semidefinite
$p \times p$ matrix $\NM$ and vector $\beta \in \bbR^p$.
The following affirms that $b=\beta/|\beta|$
is an eigenvector of $\Sigma$ with eigenvalue
$\ip{\beta}{\beta}$.

\begin{assumption}[w.l.o.g.] \label{asm:mub}
$\NM b = 0$ and $\avp{b}\ge 0$ for any $p$.
\end{assumption}

To state our additional assumptions on the model,
we project the data 
vector $y \in \bbR^p$ onto the eigenvectors of $\Sigma$. 
More precisely, define
\begin{align} \label{xp}
   \xp = \ip{\beta}{y} / \ip{\beta}{\beta} \s .
\end{align}
It is immediate that $\Exp(\xp) = 0$
and $\Var(\xp) = 1$. For every eigenvalue 
$\reig_i$ of $\NM$, let 
\begin{align} \label{epsi}
   \nois_i = \ip{\nvec}{y} \s[16] \text{where} \s[16]
   \nvec \in \bbR^p 
  \s[2] \cst \s[2] \NM \s \nvec = \reig_i \s \nvec\s .
\end{align}

Note, $\Exp (\nois_i) = 0$ and $\Var(\nois_i) = \reig_i$.  As
the dimension $p$ grows we obtain a sequence $\{\nois_i\}_{i \ge
1}$. As a technical remark, $\nois_i = \nois_{i,p}$ and $\reig_i
= \reig_{i,p}$ depend on $p$. We consider a sequence of models
$\req{popcov}$ constructed from sequences $\{\beta_i\}_{i\ge 1}$
and $\{\NM_p\}$.

\begin{assumption} \label{asm:matrix}
For constants $\mun\in \bbR$ and
$\sig,\del \in (0,\infty)$ as $p \upto \infty$ we have:
\begin{enumerate}
\item[\textup{(i)}] 
$\avf{\beta}=\mun$ and $\vaf{\beta} = \sig^2$.
\item[\textup{(ii)}] $\xf = \limp \xp$ exists as a $\bbR$-valued
random variable almost surely.
\item[\textup{(iii)}] 
$\avf{\nois}=\avf{\nprj} = 0$ almost surely
for $\{\nprj_i\}_{i \ge 1}$
with $\nprj_i= \nprj_{i,p}
= \nois_i p \ave{\nvec^i}$.
\item[\textup{(iv)}] 
$\vaf{\nois} = \avf{\reig} = \del^2$ almost surely.
\end{enumerate}
\end{assumption}

Condition {(i)} imposes regularity on the sequence
$\{\beta_i\}_{i \ge 1}$ and implies that the largest eigenvalue
of $\Sigma$ (i.e., $\ip{\beta}{\beta}$) grows linearly with $p$.
The random variable $\xf$ in {(ii)} is closely related to a
principal component score (in the limit $p \upto \infty$), and
it captures the randomness along the first (population)
principal component.  Conditions {(iii)} and {(iv)} are related
to certain requirements on a measure of sphericity of the model
(e.g., $\tr(\Sigma)^2/(\tr(\Sigma^2)p)$) and summarize the
conclusions of \citen[Theorem 1]{jung2009}. In particular,
{(iii)} may be viewed as laws of large numbers for
$\{\nois_i\}_{i \ge 1}$ and $\{\nprj_i\}_{i \ge 1}$ and suggests
$p \ave{\nvec^i} \asymp 1$ whereas {(i)} implies the first
principal component has $\sqrt{p}\ave{b} \asymp 1$.  This
highlights that the spike eigenvector $b$ differs from those of
$\NM$ both in terms of the magnitude of its eigenvalue as well
as the structure of the vector itself.  According to {(iv)}, the
average eigenvalue of $\NM$ (unlike the spike eigenvalue
$\ip{\beta}{\beta}$) is bounded in $p$, and per {(iii)}, the
eigenvectors $\nvec^i$ do not have entries biased towards a
nonzero mean $\sqrt{p}\ave{\nvec^i}$, unlike $b$ (an exception
is $\mun = 0$, which is the case when the James-Stein estimator
will turn out to be ineffective; see Remark \ref{rem:muzero}).


The following assumption is a standard one in statistical
data analysis but may be (and often is) relaxed in the HDLS
setup (e.g., \citen{jung2012}).

\begin{assumption} \label{asm:iid}
There are a fixed $n \ge 2$ 
i.i.d. observations of $y \in \bbR^p$.
\end{assumption}

Our forthcoming results hold even when only $2$ observations 
are available, hence, a scarcely sampled model.
Let $\dat$ be the $p \times n$ data matrix with
the $k$th column containing the $k$th observation of $y$, and 
define the sample covariance matrix $\rmS$ by
\begin{align}
  \rmS = \dat \dat^\top /n \s .
\end{align}
We let $h \in \bbR^p$ denote the eigenvector of $\rmS$ with the
largest eigenvalue $\seig^2_p$ (see $\req{hands}$). It is 
unique only up to sign (and $|h|=1$), motivating
the following (c.f. Assumption \ref{asm:mub}).

\begin{assumption}[w.l.o.g.]\label{asm:muh}
  $\avp{h} \ge 0$ for any $p$.
\end{assumption}


We write $\rmS = \NS + \eta \eta^\top$ 
in analogy to $\req{popcov}$ (taking 
$\NS = \rmS - \eta\eta^\top$) and set
\begin{align} \label{etah}
  \eta = \seig_p h \s .
\end{align}
Next, define the following measure of finite-sample distortion,
\begin{align} \label{dst}
   \dst_n^2 = \ip{\Xf}{\Xf} / \s[-0.25] n
   \s[16] \text{and} \s[16]
    \Xf = \limp \frac{\dat^\top \beta}{\ip{\beta}{\beta}} \s .
\end{align}
The latter limit exists by Assumption \ref{asm:matrix}
while Assumption \ref{asm:iid} implies that 
$\Xf \in \bbR^n$ has i.i.d. entries (distributed
as $\xf$).
Consequently, we have $\dst^2_n \to 1$ as $n \upto \infty$.

We can measure the error in any estimator $\eta$ 
of $\unk$ by the mean-squared error, as would be 
consistent with the James-Stein framework.
\begin{align} \label{mse}
 \msp{\eta}{\unk} = \ip{\eta - \unk}{\eta-\unk} /p
\end{align}

\begin{proposition} \label{prop:mse} 
Let $\unk = \dst_n \beta$ and suppose
Assumptions \ref{asm:mub}--\ref{asm:muh} hold. Then,
\begin{align}
  \msf{\eta}{\unk} = \frac{\s \del^2}{n \s} \s .
\end{align}
\end{proposition}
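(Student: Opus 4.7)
The plan is to exploit the HDLS dual structure. Each column $y^{(k)}$ of $\dat$ admits the spectral decomposition $y^{(k)} = \xp^{(k)} \beta + \nois^{(k)}$ along the eigenvectors of $\Sigma$, so we may write $\dat = \beta \s \Xp^\top + E$, where $\Xp \in \bbR^n$ collects $\{\xp^{(k)}\}_{k=1}^n$ and the $p \times n$ matrix $E$ has columns $\nois^{(k)}$. The dual Gram matrix $\tilde{\rmS} = \dat^\top \dat /n \in \bbR^{n\times n}$ shares the nonzero eigenvalues of $\rmS$, and for its unit leading eigenvector $\tilde{h}$ we have $h = \dat \s \tilde{h} /(\sqrt{n}\,\seig_p)$. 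Hence $\eta = \seig_p h = \dat \s \tilde{h}/\sqrt{n} = \beta \s \ip{\Xp}{\tilde{h}}/\sqrt{n} + E\tilde{h}/\sqrt{n}$, cleanly separating the signal and noise contributions.

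The first step is to establish that $\tilde{\rmS}/p \to M := (\mun^2 + \sig^2)\Xf \Xf^\top/n + (\del^2/n) I_n$ almost surely as $p \upto \infty$. Entrywise, $\ip{y^{(i)}}{y^{(j)}}/(np)$ splits as $\xp^{(i)}\xp^{(j)}\ip{\beta}{\beta}/(np)$ plus signal-noise cross terms plus $\ip{\nois^{(i)}}{\nois^{(j)}}/(np)$. Assumption~\ref{asm:matrix}(i) provides $\ip{\beta}{\beta}/p \to \mun^2+\sig^2$; (iii) (via $\nprj$) gives $\ip{\beta}{\nois^{(k)}}/p \to 0$; and (iii)--(iv) together with independence of the observations yield $\ip{\nois^{(i)}}{\nois^{(j)}}/p \to \del^2 \delta_{ij}$. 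Since $\sig^2>0$, $M$ has a simple top eigenvalue with eigenvector $\Xf/|\Xf|$ and an eigengap $(\mun^2+\sig^2)|\Xf|^2/n>0$ almost surely. Because $n$ is fixed, entrywise convergence of $\tilde{\rmS}/p$ is the same as operator-norm convergence, so Weyl's inequality yields $\seig_p^2/p \to (\mun^2+\sig^2)|\Xf|^2/n + \del^2/n$, and the Davis--Kahan theorem gives $\tilde{h} \to \Xf/|\Xf|$ almost surely (the sign is fixed by Assumption~\ref{asm:muh}).

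Plugging into the expression for $\eta$, the signal coefficient satisfies $\ip{\Xp}{\tilde{h}}/\sqrt{n} \to \ip{\Xf}{\Xf/|\Xf|}/\sqrt{n} = |\Xf|/\sqrt{n} = \dst_n$, so $\eta - \dst_n \beta = \beta \s \varepsilon_p + E\tilde{h}/\sqrt{n}$ for some $\varepsilon_p \to 0$. Expanding $|\eta - \dst_n \beta|^2/p$, the term $(\ip{\beta}{\beta}/p)\s \varepsilon_p^2$ vanishes because $\ip{\beta}{\beta}/p$ is bounded; the cross term vanishes by Cauchy--Schwarz (or directly from $\beta^\top E \tilde{h}/p \to 0$); and the remaining term $\tilde{h}^\top (E^\top E/p)\tilde{h}/n$ tends to $(\Xf/|\Xf|)^\top (\del^2 I_n)(\Xf/|\Xf|)/n = \del^2/n$, again using the entrywise limit from step two. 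This produces the claimed value $\msf{\eta}{\unk} = \del^2/n$.

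The main obstacle is step two, the passage from the entrywise limit of $\tilde{\rmS}/p$ to the eigenvector convergence $\tilde{h} \to \Xf/|\Xf|$. Operator-norm control is free since $n$ is fixed, so Weyl and Davis--Kahan apply cleanly, but two inputs must be verified carefully: the off-diagonal noise convergence $\ip{\nois^{(i)}}{\nois^{(j)}}/p \to 0$ for $i \ne j$, which requires a law-of-large-numbers argument exploiting independence of the observations and the moment bounds embedded in Assumption~\ref{asm:matrix}(iii)--(iv); and positivity of the eigengap, which holds almost surely because $\sig^2 > 0$ and $|\Xf| > 0$ almost surely (the degenerate case $\xf \equiv 0$ is incompatible with $\Var(\xp) = 1$).
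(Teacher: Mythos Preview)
Your approach is correct and essentially the same as the paper's: both pass to the $n\times n$ dual Gram matrix, establish its entrywise limit, apply Weyl's inequality and the Davis--Kahan theorem to extract the leading eigenpair, and then read off $|\eta-\dst_n\beta|^2/p \to \del^2/n$ from the decomposition $\eta = \beta\,\ip{\Xp}{\tilde{h}}/\sqrt{n} + E\tilde{h}/\sqrt{n}$ (the paper routes this through the abstract Assumption~\ref{asm:jsgeom}, but the underlying computation is identical). One minor correction: the signal--noise orthogonality $\beta^\top E = 0$ holds \emph{exactly} by Assumption~\ref{asm:mub} (since $\NM b = 0$ forces $b \perp \nvec^i$), not via the $\nprj$ clause of Assumption~\ref{asm:matrix}(iii).
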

\begin{remark}  \label{rem:dst}
If $\unk = \beta$ then the right side
would be multiplied by the factor,
$1 +  \frac{\snr^2}{\dsp^2_\infty}\s  
\Big(\frac{\dst_n-1}{\dst_n}\Big)^2$
where we define a signal-to-noise ratio $\snr$ and 
signal-incoherence $\dsp_\infty$ as
\begin{align} \label{snr}
  \snr = \Big( \frac{\sig}{\del} \Big) \s \dst_n \sqrt{n} 
  \hspace{0.16in} \text{and} \hspace{0.16in}
  \dsp_\infty = \frac{1}{\sqrt{1 + (\mun/\sig)^2}} \s .
\end{align}
For $\snr$, we regard $\sig \dst_n$
as a distorted signal, and $\del/\sqrt{n}$ as noise that
vanishes as $n$ grows (also, $\sig \dst_n \to \sig$). 
The signal-incoherence $\dsp_\infty$ is the limit of
$\dsp_p = \dsp_p(\beta) = \sap{\beta}/|\beta|$ 
(per Assumption \ref{asm:matrix}) determined
by the signal-to-noise ratio $\mun/\sig$ of the 
vector $\beta$.  A large value of $\dsp_\infty$ corresponds to more
variation, or \tq{incoherence} in the entries 
$\{\beta_i\}_{i\ge 1}$
\end{remark}

A more standard way to evaluate the goodness of a
sample eigenvector $h$ is via its angle away from its
population counterpart $b = \beta/|\beta|$. To this end, let
\begin{align} \label{sph}
 \spp{h}{b} = \spp{\eta}{\unk} 
= \sin^2 \Big( \s[-2]
\arccos \frac{\ip{\eta}{\unk}} {|\eta| |\unk|} \Big) \s .
\end{align}

\begin{proposition}  \label{prop:sph}
Let $\unk = \dst_n \beta$ and suppose
Assumptions \ref{asm:mub}--\ref{asm:muh} hold. Then,
\begin{align}  \label{spherr}
 \spf{\eta}{\unk} = \frac{\dsp^2_\infty}{\dsp^2_\infty + \snr^2}
 \s[2] .
\end{align}
\end{proposition}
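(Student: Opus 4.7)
The plan is to compute the limit of $\cos^2\theta = \ip{\eta}{\unk}^2/(|\eta|^2|\unk|^2)$ and take $1-\cos^2\theta$, exploiting the Stein-type decomposition $\eta = \unk + \dev$ with $\dev = \eta-\unk$ already fixed in the excerpt. From this decomposition one has
\begin{align*}
\ip{\eta}{\unk} = |\unk|^2 + \ip{\dev}{\unk}, \qquad |\eta|^2 = |\unk|^2 + 2\ip{\dev}{\unk} + |\dev|^2,
\end{align*}
so the entire passage to the limit is controlled by the limits of $|\dev|^2/p$, $|\unk|^2/p$, and the cross-term $\ip{\dev}{\unk}/p$.

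The first two limits come for free. Proposition~\ref{prop:mse} supplies $|\dev|^2/p \to \del^2/n$, and Assumption~\ref{asm:matrix}(i) combined with $|\beta|^2/p = \avsp{\beta} + \vap{\beta}$ gives $|\unk|^2/p = \dst_n^2 |\beta|^2/p \to \dst_n^2(\mun^2+\sig^2) = \dst_n^2 \sig^2/\dsp_\infty^2$. Granting the third limit
\begin{align*}
\ip{\dev}{\unk}/p \to 0 \qquad (p \upto \infty),
\end{align*}
the two displayed identities yield $\ip{\eta}{\unk}/p \to |\unk|^2/p$ and $|\eta|^2/p \to \dst_n^2\sig^2/\dsp_\infty^2 + \del^2/n$, hence
\begin{align*}
\spf{\eta}{\unk} = 1 - \lim_{p\upto\infty}\frac{\ip{\eta}{\unk}^2}{|\eta|^2|\unk|^2} = \frac{\del^2/n}{\dst_n^2\sig^2/\dsp_\infty^2 + \del^2/n}.
\end{align*}
Dividing numerator and denominator by $\del^2/n$ and recognizing $\snr^2 = (\sig/\del)^2 \dst_n^2 n$ rearranges this into $\dsp_\infty^2/(\dsp_\infty^2 + \snr^2)$, as claimed.

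The decisive step, and the main obstacle, is the asymptotic orthogonality $\ip{\dev}{\unk}/p \to 0$; equivalently (via the identity for $|\eta|^2$ above), one must pin down the sample eigenvalue $\seig_p^2/p \to \dst_n^2(\mun^2+\sig^2) + \del^2/n$, i.e., show that the bulk inflates the top eigenvalue by exactly $\del^2/n$. My approach is to pass to the $n \times n$ dual matrix $\rmS^\ast = \dat^\top \dat/n$, whose nonzero spectrum agrees with that of $\rmS$. Decomposing the columns of $\dat$ along $\beta$ and the eigenvectors of $\NM$ via \req{xp}--\req{epsi} and using Assumption~\ref{asm:matrix}, one obtains the entrywise almost-sure limit
\begin{align*}
\tfrac{1}{p}\rmS^\ast \to \tfrac{1}{n}\big[(\mun^2+\sig^2)\s \Xf\Xf^\top + \del^2 I_n\big].
\end{align*}
Weyl's inequality then forces $\seig_p^2/p$ to the top eigenvalue of the right-hand side, namely $\dst_n^2(\mun^2+\sig^2) + \del^2/n$, and Davis--Kahan (both tools flagged in the introduction) pins the corresponding dual eigenvector $\dat^\top h/(\sqrt{n}\s\seig_p)$ to $\Xf/|\Xf|$, with the sign set by Assumptions~\ref{asm:mub} and~\ref{asm:muh}. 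Translating back via the eigenequation $\seig_p^2 \ip{h}{\beta} = \ip{\dat^\top h}{\dat^\top\beta}/n$ together with $\dat^\top\beta/|\beta|^2 \to \Xf$ yields $\seig_p \ip{h}{\beta}/p \to \dst_n(\mun^2+\sig^2) = \dst_n|\beta|^2/p$, i.e., $\ip{\eta}{\unk}/p \to |\unk|^2/p$, which is the orthogonality. Everything else is routine bookkeeping built on top of this alignment.
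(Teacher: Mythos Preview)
Your argument is correct and follows essentially the same route as the paper: both reduce $\spf{\eta}{\unk}$ to the three limits $|\unk|^2/p$, $|\dev|^2/p$, and $\ip{\unk}{\dev}/p$, and both obtain the crucial vanishing of the cross term via Davis--Kahan applied to the dual $n\times n$ matrix (the paper's $\rmL = \dat^\top\dat/p$). The only cosmetic difference is that the paper extracts $\ip{\unk}{\dev}/p \to 0$ from the explicit decomposition $\dev = \dst_n\beta\,\ip{\xvp}{\xvp-\xvf} + \red\xvp/\sqrt{n}$ together with $\ip{\beta}{\red}=0$, whereas you reach the same conclusion by reading off $\seig_p\ip{h}{\beta}$ from the eigenequation; these are equivalent bookkeeping.
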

\begin{remark}  \label{rem:muzero}
Clearly, $\req{spherr}$ also holds with  $\unk = \beta$.
\end{remark}

\section{James-Stein estimation of sample eigenvectors}\label{sec:js}
Having established that the sample eigenvector corresponding
to the spike (i.e., the largest eigenvalue) carries
finite sample error, it is natural to ask whether James-Stein
shrinkage can improve this \tq{usual} estimator.
The key to this question is the (to be established) identity
\begin{align} \label{jsform}
  \eta = \unk + \dev
  \s[16] \text{and} \s[16]
  \unk =  \dst_n \beta
\end{align}
for a random vector $\dev \in \bbR^p$ specified in 
$\req{dev}$ of Section \ref{sec:key}. As in definition
$\req{etah}$, 
we have $\eta = 
\seig_p h$ where $h$ is the
sample eigenvector with the largest eigenvalue, $\seig^2_p$.
The perturbation $\dev$ of $\unk$ turns out to be such that
the shrinkage of $\eta$ is effective.
We remark that the recipe of Section \ref{sec:intro} extends 
our derivation of the James-Stein estimator below to the 
case of multiple (there, $q$) spikes in a natural way. This
extension is effective because 
the eigenvectors corresponding to the spikes are 
mutually orthogonal,
but it is suboptimal. An optimal estimator in the 
multi-spiked setup is left for future work.

\subsection{The JS estimator}
Equation $\req{jsform}$ establishes a relationship between 
the sample and population eigenvectors that suggests
a James-Stein estimator may be derived. An informal
derivation proceeds as follows. Consider the 
shrinkage parameter
\begin{align} \label{jsceig}
 \rmc = 1 - \frac{\hat{\dvl}^2}{\var{\eta}} 
\end{align}
based on $\req{jsc}$ with $\hat{\dvl}$, an estimate
of the \tq{noise}. It is reasonable to assign the latter to 
be the average of the non-spiked, non-zero eigenvalues of  
$\rmS$. That is,
\begin{align} \label{noisest}
  \hspace{1.28in}
  \hat{\dvl}^2  
= 
\bigg( \frac{\tr(\rmS)-\seig^2_p}{n-1} \bigg) \s[2]/ p
  \hspace{0.64in} (p \ge n),
\end{align}
where the scaling by $p$ turns out to be necessary due to
the counterintuitive behavior of the HDLS asymptotics.
When $p < n$ the divisor $n-1$ must be replaced by $p-1$.

This paves the way for the James-Stein sample
eigenvector estimate,
\begin{align*}
\etaJS 
= \ave{\eta} + \rmc \s[2] (\eta - \ave{\eta})
\end{align*}
of the unnormalized eigenvector and by convention, we 
take unit length version,
\begin{align} \label{hJS}
 \hJS = \frac{\etaJS}{|\etaJS|}  
 = \frac{1}{\sqrt{p}} \s \bigg(
 \frac{\ave{h} + \rmc\s[1.5] (h - \ave{h})}
 {\sqrt{\avsq{h} + \rmc^2\s \var{h}}} 
 \bigg)
\end{align}
as the James-Stein estimator of the population
eigenvector $b = \beta/|\beta|$.

The following James-Stein type theorems characterize the
improvement due to shrinkage in the original mean-squared sense
as well as in the angle metrics.

\begin{theorem}  \label{thm:js_mse}
Suppose  Assumptions \ref{asm:mub}--\ref{asm:muh} hold. 
Then, almost surely,
\begin{align*}
  \msf{\etaJS}{\unk} &= 
  \rmc_\infty \s \msf{\eta}{\unk}  
\end{align*}
where $\rmc_\infty \in (0,1)$ is the limit of $\rmc_p = \rmc$
in $\req{jsceig}$ 
with $\snr$ defined in $\req{snr}$ and
\begin{align} \label{clim}
  \rmc_\infty = \frac{\snr^2}{1 + \snr^2}  \s .
\end{align}
\end{theorem}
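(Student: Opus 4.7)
The plan is to expand the James-Stein error via the identity $\eta = \unk + \dev$ from $\req{jsform}$, in which $\unk = \dst_n\beta$. Using $\etaJS = (1-\rmc)\ave{\eta} + \rmc\s\eta$ (with $\ave{\eta}$ treated as the $p$-vector of equal entries), we have
\begin{align*}
 \etaJS - \unk = (1-\rmc)(\ave{\eta} - \unk) + \rmc\s\dev.
\end{align*}
Gathering the mean and centered parts of the two $p$-vectors on the right, and noting that the $\ave{\dev}^2$ coefficient collapses to $(1-\rmc)^2 + 2\rmc(1-\rmc) + \rmc^2 = 1$, yields
\begin{align*}
 \msp{\etaJS}{\unk}
 = (1-\rmc)^2 \var{\unk} + \rmc^2\s \var{\dev}
 - 2\rmc(1-\rmc)\cov{\unk}{\dev} + \ave{\dev}^2.
\end{align*}

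The next step is to pass to $p \upto \infty$ using (i) $\vaf{\unk} = \dst_n^2\sig^2$, immediate from Assumption \ref{asm:matrix}(i) and $\unk = \dst_n\beta$; (ii) $\avf{\dev} = 0$; (iii) $\cof{\unk}{\dev} = 0$; and (iv) $\vaf{\dev} = \del^2/n$. The last three refine Proposition \ref{prop:mse}, which supplies only $\vaf{\dev} + \avf{\dev}^2 = \del^2/n$; proving them requires the explicit structure of $\dev$ delivered by the identity $\req{jsform}$ together with the HDLS perturbation tools (Weyl's inequality and the Davis--Kahan theorem) advertised in the introduction. This is the main technical hurdle: one must pin down not just the magnitude of the finite-sample perturbation but also its mean, its variance, and its asymptotic orthogonality to the signal $\unk$.

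To show $\rmc_p \to \rmc_\infty$ I would treat the denominator and the estimate $\hat{\dvl}^2$ in $\req{jsceig}$ separately. For the denominator, $\vaf{\eta} = \vaf{\unk} + 2\cof{\unk}{\dev} + \vaf{\dev} = \dst_n^2\sig^2 + \del^2/n$ via (i)--(iv). For $\hat{\dvl}^2$ I would pass through the dual covariance $\rmS_D = \dat^\top\dat/n$, whose nonzero spectrum matches that of $\rmS$. Under Assumption \ref{asm:matrix} one obtains the entrywise HDLS limit $\rmS_D/p \to ((\mun^2+\sig^2)\s\Xf\Xf^\top + \del^2 I_n)/n$ almost surely, and the rank-one structure of $\Xf\Xf^\top$ (with nonzero eigenvalue $|\Xf|^2 = n\dst_n^2$) produces spectrum $(\mun^2+\sig^2)\dst_n^2 + \del^2/n$ simple together with $n-1$ copies of $\del^2/n$. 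Hence $(\tr(\rmS) - \seig_p^2)/p \to (n-1)\del^2/n$ and $\hat{\dvl}^2 \to \del^2/n$, so $\rmc_\infty = \dst_n^2\sig^2/(\dst_n^2\sig^2 + \del^2/n) = \snr^2/(1+\snr^2)$ as claimed.

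Substituting into the display above leaves $\msf{\etaJS}{\unk} = (1-\rmc_\infty)^2\dst_n^2\sig^2 + \rmc_\infty^2\s\del^2/n$. Using $1-\rmc_\infty = 1/(1+\snr^2)$ and the identity $\snr^2\del^2/n = \dst_n^2\sig^2$, this collapses to $\dst_n^2\sig^2/(1+\snr^2) = \rmc_\infty\cdot\del^2/n$, which equals $\rmc_\infty\s\msf{\eta}{\unk}$ by Proposition \ref{prop:mse}. Beyond step two's analysis of $\dev$, the remaining arithmetic is routine.
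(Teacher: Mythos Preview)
Your proposal is correct and follows essentially the same path as the paper: establish the identity $\eta = \unk + \dev$, verify the limiting behavior of $\ave{\dev}$, $\var{\dev}$, $\cov{\unk}{\dev}$ via the dual matrix and Weyl/Davis--Kahan (the paper packages these as Assumption~\ref{asm:jsgeom} and carries out the verification in Section~\ref{sec:verify}), show $\hat{\dvl}^2 \to \del^2/n$ from the dual spectrum, and then substitute into the MSE expansion. Your expansion in terms of $(\var{\unk},\var{\dev},\cov{\unk}{\dev})$ is algebraically equivalent to the paper's version in terms of $(\var{\unk},\var{\eta},\cov{\eta}{\unk})$ used in Appendix~\ref{app:aux}, so the only real difference is that you bypass the abstract Proposition~\ref{prop:js} and compute directly.
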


\begin{theorem}  \label{thm:js_sph}
Suppose  Assumptions \ref{asm:mub}--\ref{asm:muh} hold. 
Then, almost surely,
\begin{align*}
 \spf{\etaJS}{\unk} 
 =\spf{\hJS}{b} 
 &= \csph_\infty 
  \s \spf{h}{b}
\end{align*}
where $\csph_\infty \in [\rmc_\infty,1]$ 
where $\rmc_\infty$ is in $\req{clim}$ and with
$\snr$ and $\dsp_\infty$ in $\req{snr}$, we have
\begin{align} \label{slim}
 \csph_\infty = \rmc_\infty + \frac{\dsp^2_\infty}
   {1+\snr^2} \s .
\end{align}
\end{theorem}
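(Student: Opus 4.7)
The equality $\spf{\etaJS}{\unk}=\spf{\hJS}{b}$ is immediate from scale invariance of the angle, since $\hJS$ and $b$ are positive scalar multiples of $\etaJS$ and $\unk$ respectively (with $\dst_n>0$). It therefore suffices to analyze the squared sine
\[
 \spp{\etaJS}{\unk} \;=\; 1 - \frac{\ip{\etaJS}{\unk}^2}{|\etaJS|^2\,|\unk|^2}.
\]
I would decompose using $|u|^2/p = \avp{u}^2+\vap{u}$ and $\ip{u}{v}/p = \avp{u}\avp{v}+\cop{u}{v}$, reducing the expression to the five scalars $\avp{\eta},\avp{\unk},\vap{\eta},\vap{\unk},\cop{\eta}{\unk}$ together with $\rmc_p$. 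The definition $\etaJS = \avp{\eta}+\rmc_p(\eta-\avp{\eta})$ yields the elementary identities $\avp{\etaJS}=\avp{\eta}$, $\vap{\etaJS}=\rmc_p^2\vap{\eta}$, and $\cop{\etaJS}{\unk}=\rmc_p\cop{\eta}{\unk}$, so the whole squared sine becomes a rational function of those quantities.

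Next, I would take $p\upto\infty$. Set $A=\mun\dst_n$, $S=\sig^2\dst_n^2$, $N=\del^2/n$. The identity $\eta=\unk+\dev$ from Section \ref{sec:key}, together with Assumption \ref{asm:matrix} and the $\dev$-limits already used in the proofs of Propositions \ref{prop:mse}--\ref{prop:sph} (in particular $\avf{\dev}=0$ and $\cof{\unk}{\dev}=0$), yields almost surely $\avp{\eta},\avp{\unk}\to A$, $\vap{\unk}\to S$, $\vap{\eta}\to S+N$, and $\cop{\eta}{\unk}\to S$. Theorem \ref{thm:js_mse} supplies $\rmc_p\to\rmc_\infty=S/(S+N)$. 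Substituting and simplifying collapses the limit to the compact form
\[
 \spf{\etaJS}{\unk} \;=\; \frac{NS}{(A^2+S)(S+N)}.
\]

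Finally I would compare with $\spf{\eta}{\unk}=\spf{h}{b}=N/(A^2+S+N)$ from Proposition \ref{prop:sph}, so that the prefactor $\csph_\infty$ equals $S(A^2+S+N)/[(A^2+S)(S+N)]$. Splitting this as $S/(S+N)+SN/[(A^2+S)(S+N)]$ and applying the identity $S/(A^2+S)=\sig^2/(\sig^2+\mun^2)=\dsp_\infty^2$ (which holds since $A^2+S=\dst_n^2(\mun^2+\sig^2)$) together with $N/(S+N)=1/(1+\snr^2)$, the prefactor becomes $\rmc_\infty+\dsp_\infty^2/(1+\snr^2)$, matching \eqref{slim}. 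The bound $\csph_\infty\in[\rmc_\infty,1]$ then reads off from $\dsp_\infty^2\in[0,1]$ and the identity $\rmc_\infty+1/(1+\snr^2)=1$. The main obstacle is really bookkeeping: carefully tracking the finite-$p$ means, variances, and covariances, swapping in the correct limits, and carrying out the closing algebraic simplification; no analytic tools beyond those already used for Theorem \ref{thm:js_mse} are required.
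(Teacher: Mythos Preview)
Your proposal is correct and follows essentially the same route as the paper. The paper packages the argument by first verifying that $\unk,\dev,\hat\dvl$ satisfy Assumption~\ref{asm:jsgeom} and then invoking the abstract Proposition~\ref{prop:js}, whose proof in Appendix~\ref{app:aux} carries out exactly your computation: expand $\sph{\eta(\rmc)}{\unk}$ in terms of $\ave{\eta},\ave{\unk},\var{\eta},\var{\unk},\cov{\eta}{\unk}$, pass to the limit via Lemma~\ref{lem:basics} (your $A,S,N$ are the paper's $\avelet,\unkv^2,\dvl^2$), and simplify to $\csph_\infty$; the only cosmetic difference is that the paper reaches the formula via $(1-\rmc_\infty)\dsp_\infty^2$ rather than your splitting of the ratio.
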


Related results may be found in \citen{goldberg2020}
and \citen{goldberg2021} but the
metrics there are motivated by solutions of certain
quadratic programs that are useful 
in finance and portfolio theory.

\begin{remark} \label{rem:dev}
Note, $\csph_\infty=1$ (i.e., no improvement in angle)
if and only if $\mun=0$.
\end{remark}

\subsection{The geometry of Stein's paradox} 
We shed insight into the James-Stein estimator in $\req{hJS}$ 
by deriving general conditions under which Theorems
\ref{thm:js_mse} and \ref{thm:js_sph} hold.
Our analysis adopts the pure frequentist perspective 
in \citen{gupta1991} and supplements it by illustrating 
the Euclidean and the spherical geometry of the estimator.
The two geometries reflect the definitions of the error
($\mselet$ and $\sphlet$)
in the two theorems.

\newcommand{\jsarg}{c}
\newcommand{\unkv}{\upxi}
\newcommand{\unkm}{\avelet}

\begin{lemma}  \label{lem:cmin}
Let $\eta = \unk + \dev$ for $\unk,\dev \in \bbR^p$. Then,
the solutions of the optimizations 
$\min_{\jsarg \in \bbR} \mse{\eta(\jsarg)}{\unk}$ 
and 
$\min_{\jsarg \in \bbR} \sph{\eta(\jsarg)}{\unk}$ 
(see $\req{mse}$ and $\req{sph}$)
are given by
\begin{align} \label{cex}
  \rmc^{\mselet} = 
  \frac{\cov{\unk}{\eta}}{\var{\eta}}  
  \s[16] \text{and} \s[16]
  \rmc^{\sphlet} = \frac{\ave{\eta}}{\ave{\unk}} \s[2]
  \rmc^{\mselet} \s .
\end{align}
\end{lemma}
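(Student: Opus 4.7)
My plan is to reduce both minimizations to elementary one-dimensional calculus problems that exploit the single identity $\ip{\eta - \ave{\eta}}{\rme} = 0$, where $\rme = (1,\dots,1)^\top$ (using the scalar-subtraction convention of footnote~\ref{ft:abuse}). Writing $u = \eta - \ave{\eta}$, the shrunk estimator becomes $\eta(\rmc) = \ave{\eta} + \rmc u$, and the orthogonality $\ip{u}{\rme} = 0$ is the algebraic fact that drives both computations.

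For the MSE criterion I would expand $p \cdot \mse{\eta(\rmc)}{\unk} = |\rmc u - (\unk - \ave{\eta})|^2$, a quadratic in $\rmc$ whose first-order condition reads $\rmc |u|^2 = \ip{u}{\unk - \ave{\eta}}$. The constant shift $\ave{\eta}$ drops out by $\ip{u}{\rme} = 0$, and a second use of the same identity converts $\ip{u}{\unk}$ into $\ip{u}{\unk - \ave{\unk}} = p\cov{\unk}{\eta}$. Together with $|u|^2 = p\var{\eta}$ this yields $\rmc^{\mselet} = \cov{\unk}{\eta}/\var{\eta}$.

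For the angular criterion, $\sph{\eta(\rmc)}{\unk} = 1 - \ip{\eta(\rmc)}{\unk}^2/(|\eta(\rmc)|^2 |\unk|^2)$, so minimizing over $\rmc$ is equivalent to maximizing $\ip{\eta(\rmc)}{\unk}^2/|\eta(\rmc)|^2$ (the factor $|\unk|^2$ is $\rmc$-free). Using $\ip{u}{\rme} = 0$ once more, I would decompose $\ip{\eta(\rmc)}{\unk} = p(\ave{\eta}\ave{\unk} + \rmc \cov{\unk}{\eta})$ and $|\eta(\rmc)|^2 = p(\ave{\eta}^2 + \rmc^2 \var{\eta})$, with the cross term $2\rmc \ave{\eta}\ip{\rme}{u}$ vanishing cleanly. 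The objective then has the form $(A+B\rmc)^2/(C+D\rmc^2)$ with $A = \ave{\eta}\ave{\unk}$, $B = \cov{\unk}{\eta}$, $C = \ave{\eta}^2$, $D = \var{\eta}$, and its derivative factors as $2(A+B\rmc)(BC - AD\rmc)/(C+D\rmc^2)^2$. Discarding the spurious root $A+B\rmc = 0$ (which makes the cosine vanish), the interior maximum is $\rmc^{\sphlet} = BC/(AD) = (\ave{\eta}/\ave{\unk})\rmc^{\mselet}$, as required.

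There is no genuine obstacle beyond bookkeeping with the scalar-versus-vector subtraction convention. Geometrically, the MSE optimum invokes only Euclidean projection onto the affine hyperplane through $\ave{\eta}\rme$, whereas the angular optimum additionally invokes the Pythagorean split of $|\eta(\rmc)|^2$ into a \tq{mean-direction} piece $p\ave{\eta}^2$ and a \tq{deviation} piece $\rmc^2 p \var{\eta}$; this split is exactly what produces the extra factor $\ave{\eta}/\ave{\unk}$ that distinguishes $\rmc^{\sphlet}$ from $\rmc^{\mselet}$.
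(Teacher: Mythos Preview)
Your proposal is correct and follows essentially the same route as the paper: expand $\mse{\eta(\rmc)}{\unk}$ as a quadratic in $\rmc$ and $\sph{\eta(\rmc)}{\unk}$ as $1$ minus a ratio of the form $(A+B\rmc)^2/((C+D\rmc^2)|\unk|^2)$, then optimize directly. The paper records exactly the same two closed-form expressions (via the identity $\ip{\eta-\ave{\eta}}{\rme}=0$ you highlight) and states that Lemma~\ref{lem:cmin} follows by minimizing them; your write-up is simply more explicit about the calculus step and the dismissal of the spurious root $A+B\rmc=0$.
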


The next assumptions may be viewed as laws of large
numbers in the random setting or regularity
conditions in a deterministic one. They concern
the sequences $\{\unk_i\}_{i=1}^\infty$ and 
$\{\dev_i\}_{i=1}^\infty$, and allow for dependence
on $p$ (i.e., 
$\unk_i=\unk_i^{(p)}$ and
$\dev_i=\dev_i^{(p)}$).

\begin{assumption}  For constants $\avelet \in \bbR$ and 
$\dvl,\xig \in (0,\infty)$ as $p \upto \infty$, we have:
\begin{enumerate}
\item[\textup{(i)}] 
$\avf{\unk} = \unkm$ and $\vaf{\unk} = \unkv^2$,
\item[\textup{(ii)}] $\avf{\dev} = 0$ and $\vaf{\dev} = \dvl^2$,
\item[\textup{(iii)}] $\cof{\unk}{\dev} = 0\s$,
\item[\textup{(iv)}] there exists an estimator 
$\hat{\dvl} = \hat{\dvl}_p$ for each $p$
with $\hat{\dvl}_\infty =\dvl$.
\end{enumerate}
\label{asm:jsgeom}
\end{assumption}

The following identities follow by direct calculation.

\begin{lemma}  \label{lem:basics}
Suppose $\{\unk_i\}$ and $\{\dev_i\}$ satisfy
Assumption \ref{asm:jsgeom} and $\eta_i = \unk_i + \dev_i$.
Then (almost surely), $\avf{\eta} = \unkm$,
$\cof{\eta}{\unk} = \unkv^2$ 
and $\vaf{\eta} = \unkv^2 + \dvl^2$.
\end{lemma}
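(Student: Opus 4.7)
The plan is to exploit the linearity of the sample mean $\avp{\cdot}$ and the bilinearity of the sample covariance $\cop{\cdot}{\cdot}$ applied to the decomposition $\eta = \unk + \dev$, and then to pass to the $p \upto \infty$ limit using the hypotheses in Assumption \ref{asm:jsgeom}. All three identities reduce to finite-dimensional algebraic computations at each $p$, with the limits taken at the end; no matrix-perturbation machinery or probabilistic estimates are needed here.

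First I would observe that by the definition $\avp{u} = \ip{u}{\rme}/p$, the functional $\avp{\cdot}$ is linear on $\bbR^p$, so at every $p$ one has $\avp{\eta} = \avp{\unk} + \avp{\dev}$. Letting $p \upto \infty$ and invoking parts (i) and (ii) of Assumption \ref{asm:jsgeom} gives $\avf{\eta} = \unkm + 0 = \unkm$ almost surely.

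Next, the sample covariance $\cop{\cdot}{\cdot}$ is a symmetric bilinear form, as is immediate from the definition together with the linearity of the centering $u \mapsto u - \avp{u}\rme$. Hence
\begin{align*}
\cop{\eta}{\unk} &= \cop{\unk}{\unk} + \cop{\dev}{\unk} = \vap{\unk} + \cop{\dev}{\unk},\\
\vap{\eta} &= \cop{\eta}{\eta} = \vap{\unk} + 2\cop{\unk}{\dev} + \vap{\dev},
\end{align*}
at each fixed $p$. Passing to the limit and invoking parts (i)--(iii) of the assumption yields $\cof{\eta}{\unk} = \unkv^2 + 0$ and $\vaf{\eta} = \unkv^2 + 0 + \dvl^2$, respectively.

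There is no genuine obstacle in this argument; the content is purely the linearity of $\avp{\cdot}$ and bilinearity of $\cop{\cdot}{\cdot}$ combined with the assumed entrywise limits. The only minor care is that the sequences $\{\unk_i\}$ and $\{\dev_i\}$ may be random, so one should work on a single almost-sure event on which all the convergences in Assumption \ref{asm:jsgeom} hold simultaneously; since each finite-$p$ identity is a deterministic algebraic fact, the three limiting identities then follow \emph{almost surely}, as claimed.
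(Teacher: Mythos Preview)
Your argument is correct and is exactly the kind of elementary direct computation the paper has in mind; indeed the paper omits the proof entirely, remarking only that the identities ``follow by direct calculation.'' Your use of the linearity of $\avp{\cdot}$ and bilinearity of $\cop{\cdot}{\cdot}$, followed by passing to the $p\upto\infty$ limit via Assumption~\ref{asm:jsgeom}, is precisely that calculation.
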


We define the signal-to-noise ratio $\snr$ 
and the signal-incoherence $\dsp_\infty$ as
\begin{align} \label{asnr}
\snr = \frac{\unkv}{\dvl} 
\hspace{0.16in} \text{and} \hspace{0.16in}
\dsp_\infty = 
\frac{1}{\sqrt{1 + (\unkm\s[-1]/\unkv)^2}} \s ,
\end{align}
which are compatible with $\req{snr}$ 
upon taking
$\unk =\dst_n \beta$ and $\dvl = \del/\sqrt{n}$.
The following result establishes the conclusions
of Theorems \ref{thm:js_mse} and \ref{thm:js_sph}
in our abstract setting.

\begin{proposition} \label{prop:js}
Let $\eta = \unk + \dev$ where $\unk,\dev \in \bbR^p$ and an
estimator $\hat{\dvl}$ satisfy 
Assumption~\ref{asm:jsgeom}. Then, for
$\rmc_\infty$ and $\csph_\infty$ defined in $\req{clim}$ and 
$\req{slim}$ but with $\snr$ and $\dsp_\infty$ in $\req{asnr}$
the estimate
$\eta(\rmc) = \eta + \rmc \s (\eta - \ave{\eta})$
with parameter
$\rmc = 1 - \frac{\hat{\dvl}^2}{\var{\eta}}$ satisfies
\begin{align*}
  \msf{\eta(\rmc)}{\unk}
  = \rmc_\infty \s \msf{\eta}{\unk} 
  \hspace{0.08in} \text{and} \hspace{0.08in}
  \spf{\eta(\rmc)}{\unk}
  = \csph_\infty \s \spf{\eta}{\unk} \s .
\end{align*}
Moreover, the optimal parameters
$\rmc^{\mselet}$ and
$\rmc^{\sphlet}$ in $\req{cex}$ converge as $p \upto \infty$ 
to $\rmc_\infty$.
\end{proposition}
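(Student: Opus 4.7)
The plan is to reduce everything to direct limiting computations of $\msf{\cdot}{\unk}$ and $\spf{\cdot}{\unk}$ using the moment identities supplied by Lemma~\ref{lem:basics}, with the observation that under Assumption~\ref{asm:jsgeom} the needed averages, variances, and covariances of $\unk$, $\dev$, and $\eta$ all have clean limits. First, I would verify convergence of the shrinkage parameter: since $\hat{\dvl}_\infty=\dvl$ by (iv) and $\vaf{\eta}=\unkv^2+\dvl^2$ by Lemma~\ref{lem:basics}, we get $\rmc=1-\hat{\dvl}^2/\var{\eta}\to 1-\dvl^2/(\unkv^2+\dvl^2)=\unkv^2/(\unkv^2+\dvl^2)$, which equals $\rmc_\infty$ after substituting $\snr=\unkv/\dvl$ from~\req{asnr}.

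For the mean-squared part I would write $\eta(\rmc)-\unk=(\ave{\eta}-\unkm)+\rmc\,\tilde{\eta}-\tilde{\unk}$ with $\tilde{\eta}=\eta-\ave{\eta}$ and $\tilde{\unk}=\unk-\unkm$. The cross term in the squared average vanishes because $\ave{\tilde{\eta}}=\ave{\tilde{\unk}}=0$, and the deterministic offset $(\ave{\eta}-\unkm)^2$ vanishes in the limit by~(i) and Lemma~\ref{lem:basics}, so
\begin{align*}
\msf{\eta(\rmc)}{\unk}=\rmc_\infty^2\,\vaf{\eta}-2\rmc_\infty\,\cof{\eta}{\unk}+\vaf{\unk}=\rmc_\infty^2(\unkv^2+\dvl^2)-2\rmc_\infty\unkv^2+\unkv^2.
\end{align*}
Substituting $\rmc_\infty(\unkv^2+\dvl^2)=\unkv^2$ collapses this to $\rmc_\infty\dvl^2=\rmc_\infty\,\msf{\eta}{\unk}$, using that $\msf{\eta}{\unk}=\vaf{\dev}=\dvl^2$ by (ii).

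For the spherical part I would separately compute the limits of $\ip{\eta(\rmc)}{\unk}/p=\ave{\eta}\ave{\unk}+\rmc\,\cov{\eta}{\unk}$, $|\eta(\rmc)|^2/p=\ave{\eta}^2+\rmc^2\,\var{\eta}$, and $|\unk|^2/p=\ave{\unk}^2+\var{\unk}$, giving $\unkm^2+\rmc_\infty\unkv^2$, $\unkm^2+\rmc_\infty^2(\unkv^2+\dvl^2)$, and $\unkm^2+\unkv^2$ respectively. The algebraic simplification here is the main obstacle, but using the identity $\rmc_\infty^2(\unkv^2+\dvl^2)=\rmc_\infty\unkv^2$ the denominator factors and the $\cos^2$ of the angle between $\eta(\rmc)$ and $\unk$ reduces to $(\unkm^2+\rmc_\infty\unkv^2)/(\unkm^2+\unkv^2)$. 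Taking $1$ minus and dividing by the analogous expression $\spf{\eta}{\unk}=\dvl^2/(\unkm^2+\unkv^2+\dvl^2)$ yields the ratio $\unkv^2(\unkm^2+\unkv^2+\dvl^2)/[(\unkv^2+\dvl^2)(\unkm^2+\unkv^2)]$, which I would then check matches $\csph_\infty=(\snr^2+\dsp^2_\infty)/(1+\snr^2)$ after substituting $\snr^2=\unkv^2/\dvl^2$ and $\dsp^2_\infty=\unkv^2/(\unkm^2+\unkv^2)$.

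For the final claim on the optimal parameters, I would apply Lemma~\ref{lem:cmin} directly: Lemma~\ref{lem:basics} gives $\cof{\unk}{\eta}/\vaf{\eta}=\unkv^2/(\unkv^2+\dvl^2)=\rmc_\infty$, so $\rmc^{\mselet}\to\rmc_\infty$; and since $\avf{\eta}/\avf{\unk}=\unkm/\unkm=1$ (assuming $\unkm\ne 0$, with the degenerate case $\unkm=0$ corresponding to Remark~\ref{rem:muzero}), $\rmc^{\sphlet}$ also tends to $\rmc_\infty$. No step requires anything beyond bilinearity of the averaging functionals and the limits provided by Assumption~\ref{asm:jsgeom}, so the routine obstruction is simply keeping the algebraic identities tidy — in particular the recurring use of $\rmc_\infty(\unkv^2+\dvl^2)=\unkv^2$, which is what drives the cancellations in both metrics.
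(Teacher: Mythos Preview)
Your proposal is correct and follows essentially the same route as the paper: both arguments reduce to expanding $\mse{\eta(\rmc)}{\unk}$ as a quadratic in $\rmc$ and $\sph{\eta(\rmc)}{\unk}$ via the inner-product formula, then pass to the limit using Lemma~\ref{lem:basics} and repeatedly invoke the identity $\rmc_\infty(\unkv^2+\dvl^2)=\unkv^2$ to collapse the algebra. Your treatment is in fact marginally more careful than the paper's in flagging the $\unkm=0$ case for the convergence of $\rmc^{\sphlet}$; one small imprecision is that your $\tilde{\unk}=\unk-\unkm$ has $\ave{\tilde{\unk}}=\ave{\unk}-\unkm$, which is only zero in the limit (not identically), but this does not affect the conclusion.
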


Figure $\ref{fig:js}$ illustrates the geometry of the 
estimator $\eta(\rmc)$ of the vector $\unk$.

\begin{figure}[htp!]
\begin{center}
\includegraphics[width=0.875\textwidth]{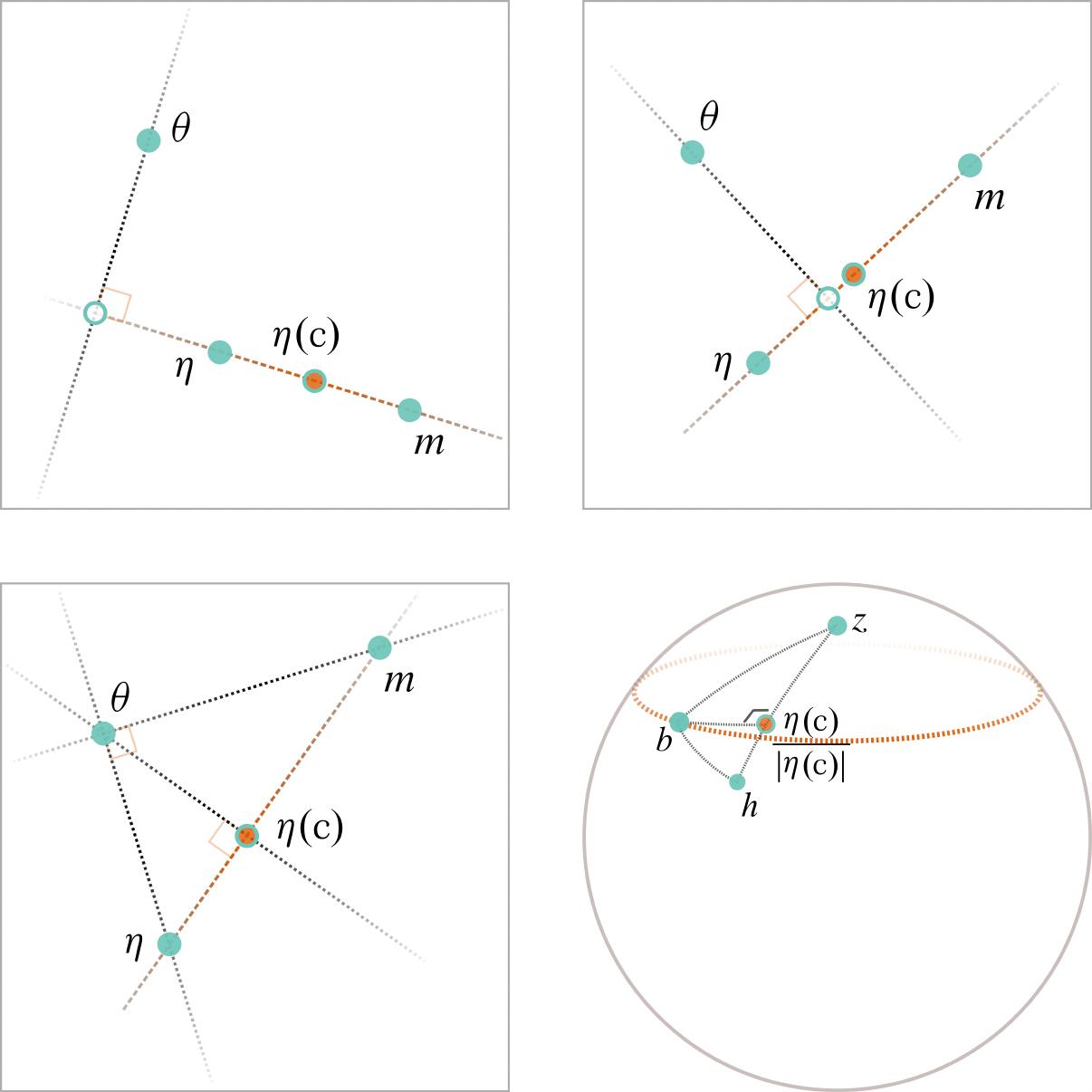}
\end{center}
\caption{Illustration of the estimator $\eta(\rmc)$ in low
\emph{(top left)},
high \emph{(top right)}, 
and limiting \emph{(bottom left)} dimensions, 
relative to the shrinkage
target $\avelet \in \bbR^p$, the
vector with all entries equal to $\ave{\eta}$. The
open circle marks the estimator with the optimal
shrinkage parameter $\rmc^{\mselet}$. In Euclidean
geometry, the estimator
$\eta(\rmc)$ is located anywhere on the ray originating
at the observation $\eta$ and passing through $\avelet$,
because $\rmc \le 1$ while $\eta(1) = \eta$
and $\eta(0) = \avelet$. The spherical geometry
\emph{(bottom right)} presents the limiting ($p = \infty$)
analog of the illustration in the bottom left. There, 
$b = \unk/|\unk|$, $h = \eta/|\eta|$ and $z = \avelet
/|\avelet|$ and the contour describes all vectors with 
mean entry $\ave{b}$, which highlights the difference between the
two geometries.
 }
\label{fig:js_sphere}
\label{fig:js}
\end{figure}

\section{Proofs the main results} \label{sec:proofs}
We proceed by the following three main steps.
\begin{enumerate}
\setlength\itemsep{0.02in}
\item[(1)] We establish the key identity $\eta
= \unk + \dev$ with $\unk = \dst_n \beta$ per $\req{jsform}$
in Section~\ref{sec:key}.
 \item[(2)] We derive the convergence properties 
in the HDLS regime of the eigenvalues
and eigenvectors of $\rmS$ under our spiked covariance model
setting in Section \ref{sec:dualconv}.
\item[(3)] We verify that $\unk,\dev$ in (1) 
and 
$\hat{\dvl}$ in $\req{noisest}$
satisfy the 
conditions of
Assumption \ref{asm:jsgeom}, which leads to the
guarantees for James-Stein shrinkage in Proposition
\ref{prop:js}.
\end{enumerate}

Theorems \ref{thm:js_mse} and \ref{thm:js_sph} and then
corollaries of Proposition \ref{prop:js}, 
which is proved in Appendix \ref{app:aux}.
We will make use of two classic results in matrix perturbation 
theory. 

\begin{theo}[Weyl] Let $\rmA$ and $(\rmA + \Delta)$ be 
(real) symmetric $n \times n$ matrices with eigenvalues 
$\upalpha_1 \ge  \cdots \ge \upalpha_n$ and 
$\zeta_1 \ge \cdots \ge \zeta_n$ respectively. Then,
 \begin{align*}
  \max_{1 \le j \le n} |\upalpha_j - \zeta_j| 
  \le |\Delta |  \s .
\end{align*}
\end{theo}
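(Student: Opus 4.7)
The plan is to derive Weyl's inequality from the Courant--Fischer min-max characterization of eigenvalues of a real symmetric matrix, which is the standard route and keeps the proof to a few lines. Specifically, I would recall that for any real symmetric $n \times n$ matrix $\rmA$ with ordered eigenvalues $\upalpha_1 \ge \cdots \ge \upalpha_n$, one has
\begin{align*}
 \upalpha_j \s=\s \max_{\dim \scrS = j} \s \min_{x \in \scrS,\s |x|=1} \ip{x}{\rmA x},
\end{align*}
and analogously for $\zeta_j$ with $\rmA$ replaced by $\rmA + \Delta$. This representation is the only nontrivial ingredient; everything else is a pointwise perturbation estimate.

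Next, for every unit vector $x \in \bbR^n$, I would use Cauchy--Schwarz together with the definition $|\Delta| = \max_{|x|=1} |\Delta x|$ of the spectral norm (this matches how the norm $|\cdot|$ is used on vectors elsewhere in the paper) to obtain the uniform bound $|\ip{x}{\Delta x}| \le |\Delta|$. Adding and subtracting this inequality from $\ip{x}{\rmA x}$ gives the pointwise sandwich
\begin{align*}
 \ip{x}{\rmA x} - |\Delta|
 \s\le\s \ip{x}{(\rmA + \Delta) x}
 \s\le\s \ip{x}{\rmA x} + |\Delta|
\end{align*}
which holds for every unit vector $x$ simultaneously.

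The final step is to take the minimum over unit vectors $x$ in an arbitrary $j$-dimensional subspace $\scrS$, and then the maximum over all such subspaces, on each side of the sandwich. Since min and max operations preserve inequalities applied uniformly over their arguments, this yields $\upalpha_j - |\Delta| \le \zeta_j \le \upalpha_j + |\Delta|$, so $|\upalpha_j - \zeta_j| \le |\Delta|$ for every $j$, and taking the maximum over $1 \le j \le n$ concludes the argument.

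There is no real obstacle here: the only delicate point is reminding oneself that pointwise inequalities of the form $f(x) \le g(x) \le h(x)$ transfer to $\min f \le \min g \le \min h$ and $\max f \le \max g \le \max h$, which then combine to carry through the two-level Courant--Fischer expression. If one wanted to avoid the min-max theorem entirely, an alternative would be a proof by contradiction using a dimension count on eigenspaces of $\rmA$ and $\rmA + \Delta$, but the min-max approach is the cleanest and is what I would present.
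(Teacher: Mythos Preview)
Your proof via the Courant--Fischer min--max characterization is correct and is the standard textbook route. The paper, however, does not actually prove Weyl's inequality: it simply cites \citen{horn2012} (and the original \citen{weyl1912}) for a proof and moves on to use the result as a black box. So there is nothing to compare at the level of argument; your write-up supplies precisely the short proof that the paper outsources to a reference, and the min--max approach you give is essentially the one found in the cited source.
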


For a proof see \citen{horn2012} (also \citen{weyl1912}).

\begin{theo}[Davis-Kahan] Let $\rmA$ and $(\rmA + \Delta)$ be 
(real) symmetric $n \times n$ matrices with
$\rmA\s a^j = \upalpha_j a^j$ and 
$(\rmA + \Delta)\s b^j = \upbeta_j b^j$ 
for eigenvectors $a^j,b^j \in \bbR^n$ and 
eigenvalues $\upalpha_j,\upbeta_j \in \bbR$. Suppose
$\upalpha_1 \ge \cdots \ge \upalpha_n$
and $\upbeta_1 \ge \cdots \ge \upbeta_n$ with
the convention $\upalpha_0 = \infty
= -\upalpha_{n+1}$ and assume 
$\upgamma_j = \min\{
\upalpha_{j-1}- \upalpha_j, \upalpha_j - \upalpha_{j+1}\} > 0$. 
Then,
\begin{align*}
  \hspace{0.32in}
  |\s a^j - b^j | \le \frac{3}{\s[2]\upgamma_j} \s |\Delta|
  \hspace{0.32in}
  \text{provided (w.l.o.g.) } \ip{a^j}{b^j} \ge 0 .
\end{align*}
\end{theo}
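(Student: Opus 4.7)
The plan is to reduce the bound on $|a^j - b^j|$ to a sine-of-the-angle estimate derived from the perturbation identity $\Delta\s b^j = (\upbeta_j\s I - \rmA)\s b^j$, and then to convert that sine bound to a norm bound using the WLOG sign convention. Since $\{a^i\}_{i=1}^n$ is an orthonormal basis of $\bbR^n$, I expand $b^j = \sum_{i=1}^n c_i\s a^i$ with $c_i = \ip{a^i}{b^j}$. Applying $\rmA$ to this expansion and taking the inner product with $a^i$ yields
\[
(\upbeta_j - \upalpha_i)\s c_i \s = \s \ip{a^i}{\Delta\s b^j}, \qquad 1 \le i \le n,
\]
so squaring, summing over $i \ne j$, and using orthonormality of the $a^i$'s gives
\[
\sum_{i \ne j} (\upbeta_j - \upalpha_i)^2 c_i^2 \s \le \s |\Delta \s b^j|^2 \s \le \s |\Delta|^2 .
\]

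The next step is to insert the eigengap $\upgamma_j$. Weyl's theorem gives $|\upbeta_j - \upalpha_j| \le |\Delta|$, hence by the triangle inequality $|\upbeta_j - \upalpha_i| \ge |\upalpha_j - \upalpha_i| - |\Delta| \ge \upgamma_j - |\Delta|$ for every $i \ne j$. Substituting into the previous display and using $\sum_{i \ne j} c_i^2 = 1 - c_j^2 = \sin^2\theta_j$, with $\theta_j$ the angle between $a^j$ and $b^j$, produces the sine-theta bound $\sin\theta_j \le |\Delta|/(\upgamma_j - |\Delta|)$ whenever $|\Delta| < \upgamma_j$ (outside this range the claimed inequality is essentially vacuous, since $|a^j - b^j| \le 2$).

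The WLOG sign convention $\ip{a^j}{b^j} \ge 0$ then gives $c_j = \cos\theta_j \in [0,1]$, and so
\[
|a^j - b^j|^2 \s = \s 2(1 - c_j) \s \le \s 2(1 - c_j^2) \s = \s 2\sin^2 \theta_j,
\]
yielding $|a^j - b^j| \le \sqrt{2}\s|\Delta|/(\upgamma_j - |\Delta|)$. The main obstacle I expect is cosmetic rather than conceptual: pinning down the stated constant $3$. In the small-perturbation regime $|\Delta| \le \upgamma_j/2$ the sine-theta estimate already delivers $|a^j - b^j| \le 2\sqrt{2}\s|\Delta|/\upgamma_j$, comfortably below $3\s|\Delta|/\upgamma_j$; in the complementary regime the trivial bound $|a^j - b^j| \le 2$ together with $3\s|\Delta|/\upgamma_j \ge 3/2$ needs a little extra bookkeeping (or a slightly sharper handling of the $\upgamma_j - |\Delta|$ denominator) to match the constant, but the shape of the bound and its scaling in $|\Delta|/\upgamma_j$ comes out exactly right. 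For the downstream use in Section~\ref{sec:proofs} the precise constant is inconsequential.
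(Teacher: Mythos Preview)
Your argument is correct and is in fact the standard route to this inequality; it is essentially the proof in Yu, Wang and Samworth (2015), which is precisely what the paper does here --- it does not prove the Davis--Kahan theorem at all but simply quotes Corollary~1 of that reference. So there is nothing in the paper to compare your proof against beyond the citation.

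One small remark on the ``extra bookkeeping'' you flag for the constant~$3$: the sign convention $\ip{a^j}{b^j}\ge 0$ already gives you $|a^j-b^j|^2=2(1-\cos\theta_j)\le 2$, i.e.\ $|a^j-b^j|\le\sqrt{2}$, not merely $\le 2$. With that sharpening your case split closes cleanly: for $|\Delta|<\upgamma_j/2$ you have $|a^j-b^j|\le 2\sqrt{2}\,|\Delta|/\upgamma_j<3\,|\Delta|/\upgamma_j$, and for $|\Delta|\ge\upgamma_j/2$ you have $|a^j-b^j|\le\sqrt{2}<3/2\le 3\,|\Delta|/\upgamma_j$. No further work is needed.
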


This result is proved in \citen[Corollary 1]{yu2015}.

\subsection{Establishing the key identity} \label{sec:key}
A key tool for random matrix theory in the HDLS regime is
the dual sample covariance matrix. This is $n \times n$ matrix
($n \ge 2$ is fixed),
\begin{align}  \label{dual}
  \rmL = \dat^\top \dat / p \s .
\end{align}

The next result is well known and relates 
the spectra of $\rmS = \dat \dat^\top/n$ and $\rmL$.

\begin{lemma} \label{lem:key}
Let $\rmL u = \ell^2 u$ where $\ell^2 \in (0,\infty)$ 
and $u \in \bbR^n$.
Then, $\rmS v = \seig^2 v$ where
$v = \dat u / (\sqrt{p} \ell)$ 
and $\seig^2 = \ell^2 p /n$. Conversely, 
let $\rmS v = \seig^2 v$ where
$\seig^2 \in (0,\infty)$ 
and $v \in \bbR^p$. Then, $\rmL u = \ell^2 u$ where
$u = \dat^\top v /(\sqrt{n}\seig)$ and $\ell^2 = \seig^2 n/p$.
\end{lemma}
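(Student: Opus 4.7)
The statement is the standard dual eigenpair correspondence between $\rmA \rmA^\top$ and $\rmA^\top \rmA$, specialized to $\rmA = \dat$ with the dimension-dependent normalizations $1/n$ and $1/p$ baked into $\rmS$ and $\rmL$. My plan is a direct, symmetric two-step computation: verify the forward map $u \mapsto v$ and then its converse $v \mapsto u$ by plugging the proposed eigenvector into the defining matrix and using the hypothesized eigenrelation once to simplify.

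For the forward direction, assume $\rmL u = \ell^2 u$ with $\ell^2 > 0$, which by $\req{dual}$ is equivalent to $\dat^\top \dat \s u = p\ell^2 u$. Setting $v = \dat u /(\sqrt{p}\,\ell)$, I would compute $\rmS v = \dat \dat^\top v / n = \dat (\dat^\top \dat)\s u/(n\sqrt{p}\,\ell)$ and substitute $\dat^\top \dat \s u = p\ell^2 u$ to collapse this to $(p\ell^2/n) \cdot \dat u/(\sqrt{p}\,\ell) = (\ell^2 p/n)\s v$, which identifies $\seig^2 = \ell^2 p/n$ as claimed. The converse is the mirror image: from $\rmS v = \seig^2 v$ (equivalently $\dat \dat^\top v = n\seig^2 v$), set $u = \dat^\top v /(\sqrt{n}\,\seig)$ and run the analogous one-line computation $\rmL u = \dat^\top \dat \s u/p = \dat^\top (\dat \dat^\top) v/(p\sqrt{n}\,\seig) = (n\seig^2/p)\s \dat^\top v/(p\sqrt{n}\,\seig)/\ldots = (\seig^2 n/p)\s u$, giving $\ell^2 = \seig^2 n/p$.

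There is no real obstacle here; both steps are single-line algebraic substitutions once the eigenrelation is used. The only points that deserve a sentence of commentary are (a) the positivity hypotheses $\ell^2, \seig^2 > 0$, which are what allow the divisions by $\ell$ and $\seig$ in the definitions of $v$ and $u$, and (b) the choice of the normalizing scalars $\sqrt{p}\,\ell$ and $\sqrt{n}\,\seig$, which are tuned so that unit-norm eigenvectors map to unit-norm eigenvectors: indeed $|v|^2 = u^\top \dat^\top \dat \s u/(p\ell^2) = u^\top(p\ell^2 u)/(p\ell^2) = |u|^2$, and symmetrically for $|u|^2$ in terms of $|v|^2$. This norm-preservation will be useful downstream when the lemma is applied to unit sample eigenvectors in the HDLS asymptotic analysis.
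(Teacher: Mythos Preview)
Your proposal is correct and essentially identical to the paper's own proof: both multiply the eigenrelation by $\dat$ (equivalently, substitute the proposed $v$ into $\rmS v$) and collapse via one application of the hypothesis, then note the unit-norm preservation $|v|^2 = u^\top \dat^\top \dat \s u/(p\ell^2) = |u|^2$; the paper likewise dispatches the converse by symmetry. The only blemish is the stray ``$/\ldots$'' in your converse computation, which should read $\dat^\top(\dat\dat^\top)v/(p\sqrt{n}\,\seig) = (n\seig^2/p)\,\dat^\top v/(\sqrt{n}\,\seig) = (\seig^2 n/p)\,u$.
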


\begin{proof} 
Multiplying the identity 
$\rmL u = \ell^2 u$ by $\dat$ from both sides, we obtain
\begin{align*}
   \dat \rmL u = \ell^2 \dat u 
  \hspace{0.16in} \Rightarrow \hspace{0.16in}
   \rmS \dat u = \Big( \frac{\ell^2 p}{n} \Big) \dat u \s .
\end{align*}
Note, $(\dat u)^\top (\dat u)= (u^\top \rmL u) p = \ell^2 p$,
so $v = (\dat u) / (\sqrt{p} \ell)$ has unit length. 
Dividing by $\sqrt{p} \ell$
yields $\rmS v = \seig^2 v$ as required.
The converse has an identical argument.
\end{proof}

The spike model $\Sigma = \NM + \beta \beta^\top$ has a full 
basis of eigenvectors given by $b = \beta/|\beta|$ and 
$\{\nvec^i\}_{i=1}^{p-1}$, the latter corresponding to the
nonzero eigenvalues $\{\reig\}_{i=1}^{p-1}$ of $\NM$. Thus,
\begin{align*}
  y &= \ip{b}{y} b + 
  \tsum_{i=1}^p \ip{\nvec^i}{y}\s \nvec^i
  = \beta \xx  + \res
\end{align*}
where $\res = \tsum_{i=1}^p \nois_i \nvec^i \in \bbR^p$
and $\xx = \xp = \ip{\beta}{y}/\ip{\beta}{\beta}$ as in
$\req{xp}$. Consequently, letting $\dat$ denote the $p \times
n$ matrix of i.i.d. observations of $y \in \bbR^p$, we have
\begin{align*}
\dat = \beta \XX^\top + \red \s .
\end{align*}
where $\XX =\dat^\top \beta /\ip{\beta}{\beta}
 \in \bbR^n$ consists of i.i.d. observations of $\xx$
and $\red$ is a $p \times n$ matrix with i.i.d. columns
consisting of the observations of $\res$ as defined above.

By orthogonality, we obtain that
$\rmL = \dat^\top \dat/p= (\ip{\beta}{\beta}/p) \s[2] 
  \XX \XX^\top + \red\red^\top /p$. Let $\xvp$ be the
eigenvector of $\rmL$ with the largest eigenvalue, $\ell_p^2$,
and $\xvf= \Xf/\dst_n \in \bbR^n$ 
(the unit length normalization of $\Xf$), 
where $\Xf$ and $\dst_n$ are 
defined in $\req{dst}$.
By Lemma \ref{lem:key},
\begin{align*}
  h &= \frac{\rmY \xvp}{\sqrt{p}\ell_p}
  = \frac{(\beta \XX^\top + \red) \xvx}{\sqrt{p} \ell_p} 
  = \beta \s \frac{\dst_n \ip{\xvp}{\xvf}}{\sqrt{p/n} \ell_p}
  + \frac{\red \xvp}{\sqrt{p} \ell_p} \\
  &= \beta \Big( \frac{\dst_n}{\seig_p} \Big)
  +  \beta \Big( \frac{\dst_n}{\seig_p}\Big) 
  \ip{\xvp}{\xvp-\xvf}
  + \frac{\red \xvp}{\sqrt{n} \seig_p} 
\end{align*}
We deduce that $\eta = \seig_p h = \dst_n \beta + \dev$ 
as required by $\req{jsform}$ where
\begin{align} \label{dev}
  \dev = \dst_n \beta  \s
  \ip{\xvp}{\xvp-\xvf}
  + \frac{\red \xvp}{\sqrt{n}}  \s .
\end{align}

\subsection{Convergence of the eigen- values/vectors} 
\label{sec:dualconv} It
is not difficult to establish that limit  of $\rmL
= \rmL^{(p)}$ as $p \upto \infty$ 
(in any norm on $\bbR^{n\times n}$)
takes the following form
\begin{align*}
  \rmL^{(\infty)} = (\sig^2 + \mun^2) \s (n \dst^2_n)
   \s[2] \xvf \xvf^\top + \del^2 \rmI \s .
\end{align*}
The first term is the limit of 
$(\ip{\beta}{\beta}/p) \s \XX \XX^\top$ under Assumption
\ref{asm:matrix} (note that
$\ip{\beta}{\beta} = \var{\beta} + \avsq{\beta}$)
and the definitions of $\XX$ and $\xvf$ above. By
Assumption \ref{asm:iid}, the columns
of $\red$ are i.i.d. copies of $\res$ with 
$\Exp (\res) = 0_p$ by definition, the strong law of
large numbers, confirms the that the off-diagonal entries
of the second term are zero. That $\del^2$ determines all
the diagonal entries is again a consequence of Assumption 
\ref{asm:matrix}.
This entails proving that $\ip{\res}{\res}/p$ 
converges almost surely to $\del^2$ as is done
in Section \ref{sec:verify} item (ii).

The matrix $\rmL^{(\infty)}$ has an easily described spectrum.
Its largest eigenvalue is given by 
$\ell_\infty^2 = (\sig^2 + \mun^2) \s (n \dst^2_n) + \del^2$ 
and has the 
eigenvector $\xvf$. All remaining eigenvalues equal $\del^2$.
Since $\rmL^{(p)}$ converges (in any norm) to $\rmL^{(\infty)}$,
the largest eigenvalue $\ell^2_p$ converges to $\ell^2_\infty$
almost surely. All the remaining eigenvalues converge
to $\del^2$.

By Weyl's inequality,
setting $\rmA = \Omega$ and  $\rmB = (\rmL - \Omega)$ 
so that $\rmA + \rmB = \rmL$, we have
\begin{align}
  |\ell^2_{p-i+1} - 
((\sig^2 + \mun^2) \s (n \dst^2_n)\indf{i=1} + \del^2)| 
   &\le |\rmL^{(p)} - \rmL^{(\infty)}|
 \s[16] 1 \le i \le n\s .
\end{align}

We immediately deduce (by Lemma \ref{lem:key}) the following
result, variants of which appear in \citen{jung2012},
\citen{shen2016} and \citen{goldberg2021}. Let $\seig^2_{p-i+1}$
denote the $i$th largest eigenvalue of $\rmS$ (for $i >
\min(p,n)$ all eigenvalues are zero).

\begin{proposition} \label{prop:weyl}
 Fix $n\ge 2$ and suppose Assumptions
\ref{asm:mub}, \ref{asm:matrix} and \ref{asm:iid} hold. 
Then,
$\limp \seig^2_{p-i+1}/p = \indf{i=1} \dst_n^2 
(\sig^2 + \mun^2)  + \del^2/n$ almost surely for 
fixed $1 \le i \le n$.
\end{proposition}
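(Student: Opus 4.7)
The plan is to work with the dual sample covariance matrix $\rmL = \dat^\top \dat / p \in \bbR^{n \times n}$ from $\req{dual}$. By Lemma \ref{lem:key}, the $i$th largest eigenvalues of $\rmS$ and $\rmL$ are in bijection via $\seig^2_{p-i+1} = \ell^2_{p-i+1}\, p/n$ for $1 \le i \le n$, so it suffices to show that the eigenvalues of $\rmL = \rmL^{(p)}$ converge almost surely to those of the candidate limit
\[
  \rmL^{(\infty)} = (\sig^2 + \mun^2)(n \dst_n^2)\s \xvf \xvf^\top + \del^2 \rmI_n \s ,
\]
and then divide by $n$. The spectrum of $\rmL^{(\infty)}$ is explicit: the largest eigenvalue is $(\sig^2+\mun^2)(n\dst_n^2) + \del^2$ with eigenvector $\xvf$, and the remaining $n-1$ eigenvalues all equal $\del^2$.

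The first step is to write $\dat = \beta \XX^\top + \red$ and observe that $\beta^\top \red = 0$. This orthogonality follows from Assumption \ref{asm:mub}: since $\NM b = 0$ and each column of $\red$ lies in the span of eigenvectors $\{\nvec^i\}$ of the symmetric matrix $\NM$ with nonzero eigenvalues, we have $\beta \perp \nvec^i$ for every $i$, hence $\beta^\top \red = 0$. The cross terms in the expansion of $\dat^\top \dat$ therefore vanish and
\[
  \rmL = \frac{\ip{\beta}{\beta}}{p}\s \XX \XX^\top + \frac{\red^\top \red}{p} \s .
\]
I would then establish $\rmL^{(p)} \to \rmL^{(\infty)}$ almost surely in any matrix norm (the ambient dimension $n$ is fixed, so all norms are equivalent). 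For the rank-one part, $\ip{\beta}{\beta}/p = \avsp{\beta} + \vap{\beta} \to \mun^2 + \sig^2$ by Assumption \ref{asm:matrix}(i), while $\XX \to \Xf$ almost surely by Assumption \ref{asm:matrix}(ii), so $\XX \XX^\top \to \Xf \Xf^\top = (n \dst_n^2)\s \xvf \xvf^\top$ by continuity. For the noise part, the $(j,k)$ entry of $\red^\top \red / p$ equals $\sum_i \nois_i^{(j)} \nois_i^{(k)} / p$; diagonal entries converge to $\avf{\nois^2} = \vaf{\nois} = \del^2$ by Assumption \ref{asm:matrix}(iii)--(iv), and off-diagonal entries vanish by applying a strong law of large numbers to the independent copies of $\res$ provided by Assumption \ref{asm:iid}.

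With $\rmL^{(p)} \to \rmL^{(\infty)}$ in hand, Weyl's inequality applied to $\rmA = \rmL^{(\infty)}$ and $\Delta = \rmL^{(p)} - \rmL^{(\infty)}$ gives
\[
  \max_{1 \le i \le n} \bigl| \ell^2_{p-i+1} - \ell^2_{\infty,\,p-i+1} \bigr| \le |\Delta| \longrightarrow 0 \quad \text{a.s.,}
\]
where $\ell^2_{\infty,p} = (\sig^2 + \mun^2)(n \dst_n^2) + \del^2$ and $\ell^2_{\infty,\,p-i+1} = \del^2$ for $2 \le i \le n$. Dividing by $n$ using Lemma \ref{lem:key} delivers the stated limit, with the indicator $\indf{i=1}$ selecting between the two regimes. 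The main obstacle is the almost-sure convergence $\red^\top \red / p \to \del^2 \rmI_n$; the diagonal case reduces to the scalar LLN $\ip{\res}{\res}/p \to \del^2$ of Assumption \ref{asm:matrix} (the excerpt defers this to Section \ref{sec:verify}), while the off-diagonal case requires combining the column independence of Assumption \ref{asm:iid} with a conditional second-moment argument to handle the cross sums $\sum_i \nois_i^{(j)} \nois_i^{(k)}/p$ for $j \neq k$.
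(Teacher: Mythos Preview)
Your proposal is correct and follows essentially the same route as the paper: pass to the dual matrix $\rmL = \dat^\top\dat/p$, use the decomposition $\dat = \beta\XX^\top + \red$ with $\beta^\top\red = 0$ to write $\rmL$ as a rank-one part plus $\red^\top\red/p$, establish entrywise convergence to $\rmL^{(\infty)}$, apply Weyl's inequality, and translate back via Lemma~\ref{lem:key}. If anything, your flagging of the off-diagonal limit $\sum_i \nois_i^{(j)}\nois_i^{(k)}/p \to 0$ as the delicate step (requiring more than a naive SLLN since the $\nois_{i,p}$ depend on $p$) is more candid than the paper's own one-line appeal to the strong law.
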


Next, by the Davis-Kahan
theorem with $\rmL^{(p)} = \rmL^{\infty} + \Delta$
and $\Delta = \rmL^{(p)} - \rmL^{(\infty)}$, 
for $\xvp$ and $\xvf$ the eigenvectors of 
$\rmL^{(p)}$ and $\rmL^{(\infty)}$ with largest
eigenvalues respectively,
\begin{align}
  |\xvp-\xvf| \le \frac{3\s[2]}{\del^2}
 \s[2]  |\rmL^{(p)} - \rmL^{(\infty)}| \s.
\end{align}
Note the condition $\ip{\xvp}{\xvf} \ge 0$ is without loss
of generality as the orientation of the eigenvectors
is always arbitrary. The following result follows
immediately.

\begin{proposition} \label{prop:dk}
 Fix $n\ge 2$ and suppose that Assumptions
\ref{asm:mub}, \ref{asm:matrix} and \ref{asm:iid} hold. 
Then, we have 
$|\xvp - \xvf| \to 0$ as $p \upto\infty$ almost surely.
\end{proposition}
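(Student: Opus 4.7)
The plan follows directly from the display immediately preceding the statement: apply the Davis-Kahan theorem to the pair $\rmL^{(\infty)}$ and $\rmL^{(p)} = \rmL^{(\infty)} + \Delta$ at index $j=1$, which yields $|\xvp - \xvf| \le (3/\upgamma_1)\, |\rmL^{(p)} - \rmL^{(\infty)}|$ under the sign convention $\ip{\xvp}{\xvf} \ge 0$ (WLOG). The spectrum of $\rmL^{(\infty)}$ consists of the simple top eigenvalue $\ell_\infty^2 = (\sig^2+\mun^2)(n\dst_n^2) + \del^2$ with eigenvector $\xvf$, together with the $(n-1)$-fold eigenvalue $\del^2$, so the gap $\upgamma_1 = (\sig^2 + \mun^2)(n\dst_n^2)$ is strictly positive and independent of $p$. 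The task therefore reduces to proving $|\rmL^{(p)} - \rmL^{(\infty)}| \to 0$ almost surely as $p \upto \infty$.

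For this, I would substitute $\dat = \beta \XX^\top + \red$ into $\rmL^{(p)} = \dat^\top \dat / p$ and use $\beta^\top \red = 0$ (each column of $\red$ lies in the span of the eigenvectors $\{\nvec^i\}$ of $\NM$, which are orthogonal to $b$ by Assumption \ref{asm:mub}) to obtain $\rmL^{(p)} = (\ip{\beta}{\beta}/p)\, \XX \XX^\top + \red^\top \red / p$. For the spike piece, Assumption \ref{asm:matrix}(i) gives $\ip{\beta}{\beta}/p = \vap{\beta} + \avsp{\beta} \to \sig^2 + \mun^2$, and Assumption \ref{asm:matrix}(ii), applied to each of the $n$ (fixed) observations, gives $\XX \to \Xf$ in $\bbR^n$ almost surely, so $\XX \XX^\top \to \Xf \Xf^\top = n \dst_n^2\, \xvf \xvf^\top$. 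For the noise piece, the $(i,j)$ entry is $\ip{\res^i}{\res^j}/p$ where $\{\res^i\}_{i=1}^n$ are the i.i.d. columns of $\red$ distributed as $\res = \sum_k \nois_k \nvec^k$. The diagonal entries tend to $\del^2$ because $\sum_k \nois_k^2 / p = \vap{\nois} + \avsp{\nois}$ converges to $\del^2$ by Assumption \ref{asm:matrix}(iii)-(iv). Summing the two pieces then gives $\rmL^{(p)} \to \rmL^{(\infty)}$ in any matrix norm on $\bbR^{n \times n}$.

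The main obstacle is the off-diagonal convergence $\ip{\res^i}{\res^j}/p \to 0$ for $i \ne j$: although the observations $\res^i, \res^j$ are independent, the scalars $\{\nois_k\}$ within a single observation are generally correlated (the $\{\nvec^k\}$ form an orthonormal basis), so one cannot directly apply a classical SLLN to this inner product as a sum of independent summands. A clean route is a polarization argument: write $\ip{\res^i}{\res^j} = \tfrac{1}{4}\bigl(|\res^i+\res^j|^2 - |\res^i - \res^j|^2\bigr)$ and apply the diagonal convergence to the rescaled vectors $(\res^i \pm \res^j)/\sqrt{2}$, which by independence of observations inherit the zero-mean and $\del^2$-variance structure supplied by Assumption \ref{asm:matrix}(iii)-(iv) for the coefficient sequence. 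This is precisely the computation that the outline preceding Section \ref{sec:dualconv} defers to the verification step (item (ii)); the remainder of the argument is routine bookkeeping that combines the resulting $|\rmL^{(p)} - \rmL^{(\infty)}| \to 0$ with the Davis-Kahan bound above.
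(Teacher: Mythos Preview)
Your route is the paper's route: apply the Davis--Kahan bound to $\rmL^{(p)}=\rmL^{(\infty)}+\Delta$ at $j=1$ and reduce the proposition to $|\rmL^{(p)}-\rmL^{(\infty)}|\to 0$, which you then verify via the decomposition $\rmL^{(p)}=(\ip{\beta}{\beta}/p)\,\XX\XX^\top+\red^\top\red/p$. Your computation of the eigengap $\upgamma_1=(\sig^2+\mun^2)\,n\dst_n^2$ is in fact the one dictated by the theorem as stated (the paper's display writes $\del^2$ in the denominator; either constant is almost surely positive and $p$-free, so the conclusion is unaffected). The spike term and the diagonal of $\red^\top\red/p$ are handled exactly as in the paper.

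The one genuine problem is the polarization step you propose for the off-diagonal entries $\ip{\res^i}{\res^j}/p$. It is circular: since $|\res^i\pm\res^j|^2=|\res^i|^2+|\res^j|^2\pm 2\ip{\res^i}{\res^j}$, the ``diagonal'' limit for $(\res^i\pm\res^j)/\sqrt{2}$ equals $\del^2$ \emph{if and only if} $\ip{\res^i}{\res^j}/p\to 0$, which is precisely the claim at issue. Your justification that $(\nois^i_k+\nois^j_k)/\sqrt{2}$ ``inherits'' Assumption~\ref{asm:matrix}(iii)--(iv) does not follow from independence of observations: those limits are assumed for the coefficient sequence of a single observation of $y$, and a linear combination of two independent copies is not, in general, another observation of $y$ (this would require a stability property such as Gaussianity that the paper does not impose). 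At this step the paper does not use polarization; it appeals directly to the strong law of large numbers, using $\Exp(\res)=0_p$ and the independence of the columns $\res^i,\res^j$, to conclude that the off-diagonal entries vanish.
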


\subsection{Verifying Assumption \ref{asm:jsgeom}}
\label{sec:verify}
We make use of Assumptions \ref{asm:mub}--\ref{asm:matrix}.
There are four items to verify for $\unk = \dst_n\beta$
and $\dev$ in $\req{dev}$.
Take, $\avelet = \dst_n \mun$, $\unkv = \dst_n \sig$
and $\dvl = \del/\sqrt{n}$.
\begin{enumerate}
\item[(i)] We have $\avf{\unk} = \dst_n \mun$ and
$\vaf{\unk} = \dst_n^2 \s \sig^2$ by 
Assumption \ref{asm:matrix} part (i).
\item[(ii)] To see that $\avf{\dev} = 0$, we compute 
$\ave{\dev}$ using 
$\req{dev}$ which gives
\[ \ave{\dev} = \dst_n \ave{\beta} \ip{\xvp}{\xvp-\xvf}
+ \ave{\red \xvp}/\sqrt{n}
\]
The first term vanishes by Proposition
\ref{prop:dk} since $
|\ip{\xvp}{\xvp-\xvf}| \le |\xvf-\xvp|$ and
$\avf{\beta}$ and $\dst_n$ are finite
almost surely by Assumption \ref{asm:matrix} part (ii). The
second term vanishes because
$\ave{\red \xvp}$ may be written as a linear combination of 
a fixed $n$ realizations 
of $\ave{\res}$ with coefficients being the entries of $\xvp$,
and $|\xvf|$ is bounded.
To this end, $\ave{\res} = \avp{\res}  = \tsum_{i=1}^p
\nois_i \ave{\nvec^i} = \avp{\nprj}$ 
which tends to zero as $p \upto \infty$
(almost surely) by Assumption \ref{asm:matrix} part (iii).

Similarly, to verify that $\vaf{\dev} = \dvl$, we use 
$\req{dev}$ again to calculate that
\[ \var{\dev} 
= \dst_n^2 (\ip{\beta}{\beta}/p) \ip{\xvp}{\xvp-\xvf}^2
+ \ip{\red\xvp}{\red\xvp}/(pn) - \avsq{\dev}
\] 
where we have used the fact that $\ip{\beta}{\red} = 0_n$
by Assumption \ref{asm:mub}. Since $\dst_n$ and the limit
$\dst_n^2 (\sig^2 + \mun^2)$ of $\ip{\beta}{\beta}/p$, as
above, by Proposition \ref{prop:dk} the first term 
tends to zero  as $p \upto \infty$. For the second
term, we note that $\ip{\red \xvp}{\red \xvp}$ may
be written as convex combination of a fixed $n$ 
realizations of $\ip{\res}{\res}$ with coefficients
being the entries of $\xvp$ squared, and 
$|\xvp|^2 = |\xvf|^2=1$. We have $\ip{\res}{\res}/p
= \sum_{i=1}^p \nois_i^2/p = \var{\nois} + \avsq{\nois}$ 
as the $\{\nvec^i\}_{i=1}^p$ are orthonormal.
By Assumption \ref{asm:matrix} parts (iii) and (iv),
we have $\vaf{\nois} = \del^2$ and $\avf{\nois} = 0$.
It follows that the second term converges to 
$\del^2 |\xvf|^2/n = \del^2/n = \dvl^2$.
The last term
tends to zero since $\avf{\dev}=0$ as above, and the
claim now follows.
\item[(iii)] We have $\cov{\unk}{\dev} = \ip{\unk}{\dev} /p
 - \ave{\unk}\ave{\dev}$ and since $\avf{\unk}$ is finite, the
second term vanishes as $\avf{\dev}$ as above. Again by
$\req{dev}$ and since $\ip{\beta}{\red}=0_n$,
\begin{align*}
 \ip{\unk}{\dev}/p 
  = \dst_n^2 (\ip{\beta}{\beta} /p)\s \ip{\xvp}{\xvp-\xvf}
\end{align*}
which vanishes in the limit 
by the same arguments as in (ii) above.
\item[(iv)] To see that $\hat{\dvl} = \hat{\dvl}_p$ 
in $\req{noisest}$
is an asymptotically exact estimate of 
$\dvl = \del/\sqrt{n}$, we use Proposition \ref{prop:weyl}.
Since $(\tr(\rmS)-\seig^2_p)/p
= \sum_{i=2}^n \seig^2_{p-i+1} /p$, under the hypotheses
of Proposition \ref{prop:weyl} converges almost surely
to $\del^2 (n-1) / n$,  
$\hat{\dvl}_\infty = \dvl$.
\end{enumerate}


\appendix

\section{Auxiliary proofs} \label{app:aux}
As shown in Section \ref{sec:verify}, 
the hypotheses of Proposition \ref{prop:mse}
and Proposition \ref{prop:sph} (i.e., Assumptions~{\ref{asm:mub}
--\ref{asm:muh}}) guarantee that the conditions on  
$\{\unk_i\}_{i \ge 1}$ and $\{\dev_i\}_{i\ge 1}$ 
in Assumption \ref{asm:jsgeom} are satisfied.
Consequently the proofs of these two results,
as well as that of Proposition \ref{prop:js} which
requires Assumption \ref{asm:jsgeom} directly,
reduce to the calculations below. 
The proof of Lemma
\ref{lem:basics} is omitted as it is elementary and
that of Lemma \ref{lem:cmin} is a direct
consequence of some of the expressions below.

For any $\rmc \in \bbR$ and $\eta(\rmc)
= \ave{\eta} + \rmc \s (\eta -\ave{\eta})$, 
by direct calculation
\begin{align*} 
\mse{\eta(\rmc)}{\unk}
&= (\ave{\eta}-\ave{\unk})^2  + \var{\unk} 
  + \rmc^2 \var{\eta} - 2 \rmc \s \cov{\eta}{\unk}  \s .
\end{align*}
When $\rmc = 1$ for which $\eta(1)=\eta$ and applying
Lemma \ref{lem:basics} yields
\begin{align*}
\msf{\eta}{\unk}
&= \unkv^2 + (\unkv^2
 + \dvl^2) - 2 \unkv^2 = \dvl^2 = \del^2/n
\end{align*}
which proves Proposition \ref{prop:mse}. 
The sine of the angle squared metric is computed as 
\begin{align*} 
\sph{\eta(\rmc)}{\unk}  = 1 - \bigg(
\frac{\ip{\eta(\rmc)}{\unk}}{|\eta(\rmc)| |\unk|} \bigg)^2 
= 1 - \frac{\big(\ave{\eta}\ave{\unk} 
+ \rmc \s \cov{\eta}{\unk}\big)^2}
{(\avsq{\eta}+\rmc^2 \var{\eta})(\avsq{\unk}+\var{\unk})} \s.
\end{align*}
Using the raw estimate $\eta = \eta(1)$ for which
$\rmc  =1$ we deduce by Lemma \ref{lem:basics} that
\begin{align*}
\spf{\eta}{\unk} 
= 1- \frac{\avelet^2 + \unkv^2}
{\avelet^2+\unkv^2 + \dvl^2}
= \frac{\dvl^2}{\unkv^2 + \avelet^2 + \dvl^2}
= \frac{\dsp^2_\infty}{\snr^2 + \dsp^2_\infty}
\end{align*}
which establishes Proposition \ref{prop:sph}
with $\snr$ and $\dsp_\infty$ in $\req{snr}$
(c.f. $\req{asnr}$).

Note that minimizing the expressions for
$\mse{\eta(\rmc)}{\unk}$ and $\sph{\eta(\rmc)}{\unk}$ above
over $\rmc \in \bbR$ yields the $\rmc^{\mselet}$ and 
$\rmc^{\sphlet}$ in $\req{cex}$ proving Lemma \ref{lem:cmin}.
The limits as $p \upto \infty$ of these quantities is 
easily verified as $\rmc_\infty
= \frac{\snr^2}{1 + \snr^2}$ in $\req{clim}$ 
using Lemma \ref{lem:basics}. This establishes the last
part of Proposition \ref{prop:js}. To prove the first
part, we again apply Lemma \ref{lem:basics} to deduce that
$\rmc = \rmc_p = 1 - \frac{\hat{\dvl}_p}{\vap{\eta}}
\sim 1 - \frac{\dvl^2}{\unkv^2 + \dvl^2}
= \rmc_\infty$ as above. Also,
\begin{align*}
\msf{\eta(\rmc)}{\unk}
&= \unkv^2 + \rmc^2_\infty (\unkv^2 + \dvl^2) 
 - 2\rmc_\infty \unkv^2 \\
&= \unkv^2 + \rmc_\infty \unkv^2  - 2\rmc_\infty \unkv^2  \\
&= \unkv^2 (1 - \rmc_\infty) \\
&= \frac{ \unkv^2}{1 + \snr^2} = \rmc_\infty \dvl^2
= \rmc_\infty \msf{\eta}{\unk}
\end{align*}
as required. Similarly, by Lemma \ref{lem:basics}, we 
derive that
\begin{align*}
\spf{\eta(\rmc)}{\unk} 
&= 1- \frac{(\avelet^2 + \unkv^2 \rmc_\infty)^2}
{(\avelet^2 + \rmc_\infty^2 (\unkv^2 + \dvl^2))
(\mun^2 + \sig^2)}
= (1-\rmc_\infty)\s \dsp^2_\infty  \\
&= (1-\rmc_\infty) (\snr^2 + \dsp^2_\infty) \spf{\eta}{\unk}
\\&= \bigg(\frac{\snr^2 + \dsp^2_\infty}{1 + \snr^2} \bigg)
 \spf{\eta}{\unk}
= \csph_\infty \spf{\eta}{\unk}
\end{align*}
for $\csph_\infty$ as in $\req{slim}$. This concludes
the proof of Proposition \ref{prop:js}.

\bibliographystyle{agsm}
\bibliography{jspc}

\end{document}